\newtheorem{thm}{Theorem}[section]
\newtheorem{lem}[thm]{Lemma}
\newtheorem{prop}[thm]{Proposition}
\newtheorem{cor}[thm]{Corollary}
\newtheorem{rem}[thm]{Remark}
\newtheorem{ex}[thm]{Example}
\newcommand{\R}{\mathcal R_+}
\newcommand{\m}{\mathfrak m}
\newcommand{\p}{\mathfrak p}
\newcommand{\Hom}{{\rm Hom}}
\newcommand{\Ker}{{\rm Ker}}
\newcommand{\RR}{R(I)_{a,b}}
\newcommand{\om}{\omega_R}
\def\opn#1#2{\def#1{\operatorname{#2}}}
\opn\depth{depth}
\opn\height{height}
\opn\Ann{Ann}
\opn\soc{soc}
\title{Families of Gorenstein and almost Gorenstein rings}
\author{V. Barucci}
\address{V. Barucci - Dipartimento di Matematica - Sapienza - Universit\`a di Roma - Piazzale A. Moro 2 - 00185 Rome - Italy}
\email{barucci@mat.uniroma1.it}
\author{M. D'Anna}
\address{M. D'Anna - Dipartimento di Matematica e Informatica - Universit\`a degli Studi di Catania - Viale Andrea Doria 6 - 95125 Catania - Italy}
\email{mdanna@dmi.unict.it}
\author{F. Strazzanti}
\address{F. Strazzanti - Dipartimento di Matematica - Universit\`a di Pisa - Largo Bruno Pontecorvo 5 - 56127 Pisa - Italy}
\email{strazzanti@mail.dm.unipi.it}
\subjclass[2010]{13H10; 13A30}
\begin{document}

\begin{abstract}

\noindent Starting with a commutative ring $R$ and an ideal $I$,
it is possible to define a family of rings $\RR$, with $a,b \in
R$, as quotients of the Rees algebra $\oplus_{n \geq 0} I^nt^n$; among the rings appearing in
this family we find Nagata's idealization and amalgamated
duplication. Many properties of these rings depend only on $R$ and
$I$ and not on $a,b$; in this paper we show that the Gorenstein
and the almost Gorenstein properties are independent of $a,b$.
More precisely, we characterize when the rings in the family are
Gorenstein, complete intersection, or almost Gorenstein and we
find a formula for the type.
\end{abstract}

\keywords{Idealization; Amalgamated duplication;
Gorenstein; Almost Gorenstein; Rees algebra; complete intersection.}

\maketitle

\section*{Introduction}
Let $R$ be a commutative ring with unity and let $I \neq 0$ be a proper
ideal of $R$. In \cite{BDS} the authors introduce and study the
family of quotient rings $$R(I)_{a,b}=\R/(I^2(t^2+at+b)),$$ where
$\R$ is the Rees algebra associated with the ring $R$ with respect
to $I$ (i.e. $\R =\bigoplus_{n \geq 0}I^nt^n$) and
$(I^2(t^2+at+b))$ is the contraction to $\R$ of the ideal
generated by $t^2+at+b$ in $R[t]$.

This family provides a unified approach to Nagata's idealization
(with respect to an ideal, see \cite[pag. 2]{n}) and to
amalgamated duplication (see \cite{D'A} and \cite{DF}); they can
be both obtained as particular members of it, in particular they are
$R \ltimes I \cong R(I)_{0,0}$ and $R \Join I \cong R(I)_{0,-1}$ respectively.
This fact explains why these constructions produce rings with many
common properties; as a matter of fact, it is shown, in
\cite{BDS}, that many properties of the rings in this family
(like, e.g., Krull dimension, Noetherianity and local
Cohen-Macaulayness) do not depend on the defining polynomial. One
interesting fact about this family is that, if $R$ is a domain,
we can always find domains among its members, while the
idealization is never reduced and the amalgamated duplication is never a
domain.

In this paper we deepen the study of the rings in the family
initiated in \cite{BDS}. In particular, we characterize when these
rings are Gorenstein, complete intersection, and almost Gorenstein
and prove that these properties do not depend on the particular
member chosen in the family, but only on $R$ and $I$. The concepts
of Gorenstein ring and complete intersection ring are so prominent
that they do not need a presentation; as for the concept of almost
Gorenstein ring, we recall that it was introduced for
one-dimensional analytically unramified rings by Barucci and
Fr\"oberg in \cite{b-f}; recently this definition has been
generalized for local Cohen-Macaulay one-dimensional rings
possessing a canonical ideal (see \cite{GMP}) and successively for
rings of any Krull dimension (see \cite{GTT}). This class of rings
has been widely studied in the last years also because of its
connection with almost symmetric numerical semigroups.

The structure of the paper is the following. In the first section
we recall some properties about the family $\RR$ and complete the
characterization of its Cohen-Macaulayness. Then we prove that
$\RR$ is Gorenstein if and only if $I$ is a canonical ideal of $R$
(see Corollary \ref{gor2}); moreover, we
determine the type of $\RR$ showing that it is independent of
$a,b$ (see Theorem \ref{type0}) and, finally, we give a
characterization of the complete intersection property for $\RR$
(see Proposition \ref{ci}). In Section 2, we consider the almost
Gorenstein property of $\RR$. As for the one-dimensional case, we
give an explicit description of the canonical ideal of $\RR$ (cf.
Proposition \ref{K}) and we use it to find some characterizations
that generalize the particular cases studied in \cite{DS} and
\cite{GMP} (cf. Theorem \ref{conditions} and Corollary
\ref{conditions2}); moreover, in this case, we find a simpler
formula for the type of $\RR$ that depends only on $I$ and the
canonical module of $R$ (cf. Proposition \ref{type2}); furthermore
this formula implies that, in this case, the type of $\RR$ is odd
and included between $1$ and $2t(R)+1$, where $t(R)$ is the type
of $R$ (see Corollary \ref{type}). Finally, we prove that, also in
the higher dimensional case, the almost Gorenstein property does
not depend on $a$ and $b$ (see Proposition \ref{AGd>1}); in
particular, the results about idealization proved in \cite{GTT}
can be generalized to all the members of the family.

\section{Gorenstein property for $R(I)_{a,b}$}

We start this section by recalling some basic facts on the rings
$\RR$ proved in \cite{BDS}.

\begin{prop}\label{basics} Let $f(t)=t^2+at+b \in R[t]$ be a monic polynomial. Then
$$f(t)R[t] \cap \R=\{f(t)g(t) \ | \ g(t) \in I^2 \R\}.$$
If we denote this ideal by $(I^2f(t))$ and the quotient ring $\R/
(I^2f(t))$ by $\RR$ we have:

\smallskip
\noindent  {\rm  (1)}
$R(I)_{a,b} \cong
R \oplus I$ as $R$-module  (we will denote each element of $R(I)_{a,b}$ simply by
$r+it$, where $r \in R$ and $i \in I$);

\smallskip
\noindent {\rm  (2)} the ring extensions $R \subseteq \RR
\subseteq R[t]/(f(t))$ are both integral and, therefore, the three
rings have the same Krull dimension;

\smallskip
\noindent {\rm  (3)} let $Q$ be the total ring of  fractions of
$R(I)_{a,b}$; then each element of $Q$ is of the form $\frac{r+it}{u}$,
where $u$ is a regular element of $R$;

\smallskip
\noindent {\rm  (4)} assume that $I$ is a regular ideal, i.e.
$I$ contains a regular element; the rings $R(I)_{a,b}$ and
$R[t]/(f(t))$ have the same total ring of fractions and the
same integral closure;

\smallskip
\noindent {\rm (5)} $R$ is a Noetherian ring if and only if
$R(I)_{a,b}$ is a Noetherian ring for some $a,b \in
 R$ if and only if $R(I)_{a,b}$ is a Noetherian ring for all $a,b \in
 R$;

\smallskip
\noindent {\rm  (6)} $(R, \m)$ is local if and only if $R(I)_{a,b}$
is local. In this case the maximal ideal of $R(I)_{a,b}$ is
$M=\{m+it \ | \ m \in \m, i \in I \}$ and it is isomorphic to
$\mathfrak m \oplus I$ as $R$-module.
\end{prop}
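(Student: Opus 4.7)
The plan is to establish the preliminary identity first, then prove the six parts in sequence, using the canonical representation $r + it$ of elements of $\RR$ throughout. For the preliminary claim $f(t)R[t] \cap \R = \{f(t) g(t) : g(t) \in I^2 \R\}$, the inclusion $\supseteq$ is a direct check: if $g(t) = \sum c_n t^n$ with $c_n \in I^{n+2}$, the degree-$k$ coefficient of $f(t) g(t)$ is $c_{k-2} + a c_{k-1} + b c_k \in I^k$. The reverse inclusion is a descending induction on degrees: if $f(t) h(t) \in \R$ and $\deg h = d$, the leading term $h_d$ appears as the degree-$(d+2)$ coefficient of $f(t) h(t)$, forcing $h_d \in I^{d+2}$, after which one peels off $h_d t^d$ and iterates. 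Part (1) then follows because $t^2 \equiv -at - b$ modulo $f(t)$ lets us rewrite any $c_n t^n$ (with $n \geq 2$, $c_n \in I^n$) as $c_n t^{n-2} \cdot f(t) - a c_n t^{n-1} - b c_n t^{n-2}$, where the first summand lies in $(I^2 f(t))$ by the preliminary claim; iterating produces a representative $r + it$, and uniqueness is immediate because any nonzero element of $(I^2 f(t))$ has $t$-degree at least $2$.

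For parts (2)--(4), the key device is the involution $\sigma$ of $R[t]/(f(t))$ defined by $\sigma(t) = -a - t$, which is well defined since $-a-t$ also satisfies $f$, and which restricts to an automorphism of $\RR$ because $\sigma(r + it) = (r - ai) + (-i)t \in \RR$. For (2), the relation $t^2 = -at - b$ makes $t$ integral over $R$, and more generally any $st \in R[t]/(f(t))$ satisfies $(st)^2 + as(st) + bs^2 = 0$, making both inclusions $R \subseteq \RR \subseteq R[t]/(f(t))$ integral and preserving Krull dimension. For (3), if $r + it$ is regular in $\RR$, then so is $\sigma(r+it)$, hence the \emph{norm} $N(r+it) := (r+it) \sigma(r+it) = r^2 - ari + bi^2 \in R$ is regular in $\RR$, and therefore in $R$; inverting gives $(r+it)^{-1} = \sigma(r+it)/N(r+it)$, so any element of $Q(\RR)$ rewrites as $(r'+i't)/u$ with $u \in R$ regular. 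For (4), the integral inclusion yields equal integral closures, and picking a regular $u \in I$ lets us write any $r + st \in R[t]/(f(t))$ as $(ru + (su)t)/u \in Q(\RR)$, so the two total rings of fractions coincide.

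For (5), the $R$-module decomposition $\RR = R \oplus It$ from (1) gives the easy direction: if $R$ is Noetherian, $I$ and hence $\RR$ is a finitely generated $R$-module, so Noetherian. Conversely, any ascending chain $J_1 \subseteq J_2 \subseteq \ldots$ of ideals of $R$ extends to $J_n \RR = J_n \oplus (J_n I) t$, which satisfies $J_n \RR \cap R = J_n$ by projection onto the constant part; stabilization in the Noetherian ring $\RR$ forces stabilization of the original chain. For (6), if $(R, \m)$ is local then $I \subseteq \m$ and $M := \m \oplus I$ is an ideal of $\RR$ (since the cross term $b i_1 i_2$ in a product lies in $I \subseteq \m$), and the quotient map $r + it \mapsto r + \m$ is a ring surjection with kernel $M$, exhibiting $\RR/M \cong R/\m$ as a field. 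For any $r + it \notin M$, $r \in R^\times$ and $N(r+it) = r^2 - ari + bi^2 \in r^2(1 + \m) \subseteq R^\times$, so the norm-inverse formula of (3) shows $r + it$ is a unit of $\RR$, proving $\RR$ is local. Conversely, if $\RR$ is local, then applying lying-over and going-up to the integral extension $R \subseteq \RR$ shows $R$ has a unique maximal ideal.

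The main technical obstacle is part (3): one needs a way to clear denominators in $Q(\RR)$, and without the involution $\sigma$ and the norm $N$ it is unclear how to represent a general fraction with denominator in $R$. The same construction also drives the unit criterion in part (6), so isolating $\sigma$ early streamlines most of what follows.
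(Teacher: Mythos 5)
Your proof is correct. Note, though, that the paper itself gives no argument for this proposition: it is explicitly recalled from the earlier work \cite{BDS}, so there is no in-paper proof to compare against. Judged on its own terms, your write-up is a valid, self-contained verification: the two-step argument for the ideal identity (direct coefficient check for one inclusion, descending induction on the leading term for the other) is sound, and the reduction $t^2\equiv -at-b$ together with the observation that nonzero elements of $(I^2f(t))$ have $t$-degree at least $2$ correctly yields the decomposition $\RR\cong R\oplus I$. The genuinely nice organizational idea is the involution $\sigma(t)=-a-t$ and the norm $N(r+it)=(r+it)\sigma(r+it)=r^2-ari+bi^2\in R$: since $\sigma$ visibly preserves $\RR$, the norm of a regular (resp.\ non-$M$) element is regular in $R$ (resp.\ a unit), which simultaneously settles the denominator-clearing in (3) and the unit criterion in (6); this is a cleaner unification than treating those parts ad hoc. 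Two small points you leave implicit but which are standard and harmless: in (4), passing from $\RR\subseteq R[t]/(f(t))\subseteq Q(\RR)$ to equality of total quotient rings uses the fact that a ring sandwiched between $A$ and $Q(A)$ has the same total ring of fractions as $A$; and in (5) one should say explicitly that the finite generation of $\RR$ as an $R$-module uses that $I$ is finitely generated when $R$ is Noetherian. Neither is a gap.
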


Throughout the rest of this paper we will assume that $R$ is
Noetherian, that $I \neq 0$ is a proper ideal of $R$ and we fix all the notation above.

In order to study the Gorenstein property for $\RR$, we have to
look first at Cohen-Macaulayness (briefly CM). A weaker
formulation of the following result is Proposition 2.7 of
\cite{BDS}. For the convenience of the reader we include here the
complete proof.

\begin{prop} \label{CM}
Assume that $R$ is a local ring. The following conditions are equivalent:

\noindent  {\rm  (1)} $R$ is a CM ring and $I$ is a maximal CM $R$-module;

\noindent  {\rm  (2)} $R(I)_{a,b}$ is a CM $R$-module;

\noindent  {\rm  (3)} $R(I)_{a,b}$ is a CM ring;

\noindent  {\rm  (4)} $R$ is a CM ring and each regular $R$-sequence of $R$ is also an $R(I)_{a,b}$- regular sequence.

\end{prop}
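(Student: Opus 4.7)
I would arrange the equivalences as $(1) \Leftrightarrow (2) \Leftrightarrow (3)$ and $(1) \Leftrightarrow (4)$, leveraging throughout the $R$-module decomposition $\RR \cong R \oplus I$ from Proposition~\ref{basics}(1). For $(1) \Leftrightarrow (2)$, I would apply the standard identity $\depth_R(R \oplus I) = \min\{\depth R, \depth_R I\}$ together with $\dim_R(R \oplus I) = \dim R$ (since $I \subseteq R$). Hence $\RR$ is a maximal CM $R$-module precisely when both summands have depth equal to $\dim R$, that is, when $R$ is CM and $I$ is a maximal CM $R$-module.

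For $(2) \Leftrightarrow (3)$, the key observation is that $R \hookrightarrow \RR$ is a finite local ring extension: integrality comes from Proposition~\ref{basics}(2), finiteness from the decomposition $\RR \cong R \oplus I$, and locality with maximal ideal $M$ from Proposition~\ref{basics}(6). Consequently $\m \RR$ is $M$-primary, so a sequence in $\m$ is $\RR$-regular as an $R$-module if and only if it is $\RR$-regular as an $\RR$-module, giving $\depth_R \RR = \depth_{\RR} \RR$. Combined with $\dim_R \RR = \dim \RR$ (again Proposition~\ref{basics}(2)), the two notions of Cohen-Macaulayness agree.

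For $(1) \Rightarrow (4)$: if $R$ is CM then every $R$-regular sequence is part of a system of parameters, and any maximal CM module has such a partial sop as a regular sequence; applied to $I$ and to $R$, and combined via the direct-sum decomposition, this shows the sequence is regular on $\RR$. For $(4) \Rightarrow (3)$: extract from the CM ring $R$ a maximal regular sequence of length $\dim R = \dim \RR$, which by (4) is $\RR$-regular, forcing $\depth \RR \ge \dim \RR$ and hence equality.

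The only real subtlety is the passage between $R$-depth and $\RR$-depth in $(2) \Leftrightarrow (3)$, which relies essentially on the finiteness and locality of the extension; everything else reduces to standard direct-sum depth computations and the fact, used in $(1) \Rightarrow (4)$, that maximal Cohen-Macaulayness of a module is exactly what is needed to promote a partial system of parameters to a regular sequence.
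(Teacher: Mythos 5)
Your proposal is correct and follows essentially the same route as the paper: the direct-sum decomposition $\RR\cong R\oplus I$ for $(1)\Leftrightarrow(2)$, the finiteness of the local extension $R\subset\RR$ to identify $\depth_R\RR$ with $\depth_{\RR}\RR$ for $(2)\Leftrightarrow(3)$, and systems of parameters for condition $(4)$. The only (harmless) difference is in how $(4)$ is wired into the cycle: you prove $(1)\Rightarrow(4)\Rightarrow(3)$ using that partial systems of parameters are regular sequences on the maximal CM module $I$, whereas the paper proves $(3)\Rightarrow(4)$ and $(4)\Rightarrow(2)$, using integrality of the extension and the contraction $(x_1\RR+\dots+x_{i-1}\RR)\cap R=x_1R+\dots+x_{i-1}R$ to transfer systems of parameters between $R$ and $\RR$.
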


\begin{proof}
We set $\dim R=\dim R(I)_{a,b}=d$ (cf. Proposition \ref {basics}
(2)) and observe that also $\dim _R R(I)_{a,b}=d$ as $R$-module,
because $\Ann \RR=0$.

(1) $ \Leftrightarrow$ (2): Since $\RR$ is isomorphic to $R \oplus I$ as
$R$-module, we have that $\depth \RR = \min \{ \depth R, \depth I
\}$ and so $\depth \RR=d$ (i.e. $\RR$ is a maximal CM $R$-module)
if and only if $\depth R=d$ (i.e. $R$ is a CM ring) and $\depth
I=d$. This last equality holds if and only if $\dim_R I=\depth
I=d$, i.e. $I$ is a maximal CM $R$-module.

(2) $\Leftrightarrow$ (3): We know that $R$ and $\RR$ have the same
Krull dimension. Moreover, since the extension $R\subset \RR$ is
finite, the depth of $\RR$ as $R$-module coincides with the depth
of $\RR$ as $\RR$-module (see \cite[Exercise 1.2.26]{b-h}).
%2 $\Rightarrow$ 3: (trivial) If $x_1, \dots,x_d \in R$ and ${\bf
%x}= x_1, \dots,x_d$ is an $\RR$-regular sequence, then, since $R
%\subseteq \RR$, we have an $\RR$-regular sequence of $\RR$ of
%length $d$ and so $\RR$ is a CM ring.

(3) $ \Rightarrow$ (4): If $R$ is not a CM ring, there is a system of
parameters ${\bf x}= x_1, \dots,x_d$ of $R$ which is not an
$R$-regular sequence. Since the extension $R \subseteq \RR$ is
integral, ${\bf x}= x_1, \dots,x_d$ is also a system of parameters
of $\RR$ and it is not an $\RR$-regular sequence: in fact, if
there exists $y \in R\setminus (x_1R+\dots+x_{i-1}R)$ such that
$x_i y \in x_1R+\dots+x_{i-1}R$, then $x_i y \in
x_1\RR+\dots+x_{i-1}\RR$ and $y \notin  x_1\RR+\dots+x_{i-1}\RR$,
because  $(x_1\RR+\dots+x_{i-1}\RR) \cap R = x_1R+\dots+x_{i-1}R$.

It follows that both rings $R$ and $\RR$ are CM. To conclude we
use the fact that, in a CM ring, $\bf x$ is a regular sequence if
and only if it is part of a system of parameters (cf. \cite
[Theorem 2.1.2 (d)]{b-h}) and, as above, if $\bf x$ is part of a
system of parameters of $R$ , then it is also part of a system of
parameters of $\RR$.

(4) $ \Rightarrow$ (2): We know that there exists an $R$-regular
sequence of $R$ of length $d$ . It is also an $\RR$- regular
sequence and so $\RR$ is a CM $R$-module.

\end{proof}

We recall (following \cite[Sections 21.1 and 21.3]{E}) that a
canonical module of a zero-dimensional local ring is defined as
the injective hull of its residue class field; if $R$ is a local
CM ring of dimension $d>0$, then a finitely generated $R$-module
$\omega_R$ is a canonical module of $R$ if there
exists a non-zerodivisor $x \in R$ such that $\omega_R/x\omega_R$
is a canonical module of $R/(x)$. If $R$ has a canonical module
$\omega_R$ and this is isomorphic to an ideal of $R$, we say that
$\omega_R$ is a canonical ideal of $R$. It is well known that a CM
local ring $R$ has a canonical module if and only if it is the
homomorphic image of a Gorenstein local ring (cf. e.g.
\cite[Proposition 3.3.6]{b-h}) and the canonical module is
isomorphic to an ideal $I$ if and only if $R$ is generically
Gorenstein, i.e. $R_\p$ is Gorenstein for each minimal prime $\p$
of $R$ (see \cite[Exercise 21.18]{E}).
%when this is the case, it is well known that $I$ is an height one
%ideal and $R/I$ is Gorenstein.

The authors proved in \cite[Corollary 3.3]{BDS} that, if $R$ is a
one-dimensional Noetherian local ring and $I$ is a regular ideal of
$R$, then $\RR$ is Gorenstein if and only if $I$ is a canonical ideal of $R$.

The main goal of this section is to generalize this result to any
dimension $d\geq 0$. More generally in the next theorem
we compute the type of $\RR$ generalizing \cite[Theorem 3.2]{BDS}.

\begin{thm}\label{type0} Let $(R,\m)$ be a local CM ring of Krull dimension $d \geq 1$ and let $I$ be
a regular ideal and a maximal CM $R$-module. Then the CM type of
$\RR$ is
$$
t(\RR)=\lambda_R\left(\frac{(J:\m)\cap(JI:I)}{J}\right)+\lambda_R\left(\frac{(JI:\m)}{JI}\right),
$$
where $\lambda_R(.)$ denotes the length of an $R$-module and
$J=(x_1,x_2,\dots,x_d)$ is an ideal of $R$ generated by an
$R$-regular sequence.

In particular, the type of $\RR$ is independent of $a,b$.
\end{thm}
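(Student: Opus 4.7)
The plan is to reduce the computation of $t(\RR)$ to the analysis of the socle of $\RR/J\RR$, and then to show that this socle, as an $R$-module of finite length, has the same length as the one obtained in the idealization case $a=b=0$. By Proposition \ref{CM}, the ring $\RR$ is Cohen--Macaulay and the $R$-regular sequence $x_1,\dots,x_d$ is also $\RR$-regular; since $\RR$ has Krull dimension $d$, the ideal $J\RR$ is $M$-primary and generated by a maximal $\RR$-regular sequence. Therefore
$$
t(\RR)=\lambda_R\bigl(\soc(\RR/J\RR)\bigr),
$$
and the task reduces to describing this socle explicitly.

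Using the $R$-module decomposition $\RR\cong R\oplus I$, I would identify $\RR/J\RR$ with $R/J\oplus I/JI$ and its maximal ideal $M/J\RR$ with $\m/J\oplus I/JI$. Since $M$ is generated over $R$ by $\m$ together with $\{jt:j\in I\}$, a pair $(r,i)$ lies in the socle iff it is killed by each such generator. Expanding
$$
m\cdot(r+it)=mr+(mi)t,\qquad (jt)(r+it)=-bij+(jr-aij)t,
$$
where the second identity uses $ij\,t^2=-aij\,t-bij$ (coming from $I^2(t^2+at+b)=0$), I would deduce that $(r,i)$ is in the socle iff $r\in(J:\m)$, $i\in(JI:\m)$, $bi\in(J:I)$, and $r-ai\in(JI:I)$. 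The third condition is automatic, because $I\subseteq\m$ forces $iI\subseteq i\m\subseteq JI$ as soon as $i\in(JI:\m)$, so $biI\subseteq J$.

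The crucial step — where the independence of $a,b$ enters — is to introduce the $R$-module automorphism
$$
\phi\colon R/J\oplus I/JI\longrightarrow R/J\oplus I/JI,\qquad (r,i)\longmapsto(r+ai,\,i),
$$
with inverse $(r,i)\mapsto(r-ai,i)$. A short verification (again using $ai\in(J:\m)$ whenever $i\in(JI:\m)$) shows that $\phi$ restricts to an $R$-module isomorphism from the socle computed for $a=b=0$ onto the socle for arbitrary $(a,b)$; the two $R$-modules therefore have the same length.

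For $a=b=0$ the constraints decouple: $r$ ranges independently over $((J:\m)\cap(JI:I))/J$ and $i$ ranges independently over $(JI:\m)/JI$. The socle is then the direct sum of these two $R$-modules, and its length is the sum of their lengths, which is exactly the formula in the statement. The main obstacle I anticipate is the bookkeeping of the product $(jt)(it)$ in $\RR$: the relation $t^2+at+b\equiv 0$ on $I^2$ produces two superficially different conditions involving $a$ and $b$, and one must notice that the $b$-condition is trivialized by $I\subseteq\m$, while the $a$-condition is precisely the obstruction that the shear $\phi$ removes.
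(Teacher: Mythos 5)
Your proposal is correct and follows essentially the same route as the paper: both reduce $t(\RR)$ to the length of the socle of $\RR/J\RR$ and extract the membership conditions from the multiplication rule, noting that the $b$-condition is absorbed because $i\m\subseteq JI$ once $i\in(JI:\m)$. The only cosmetic difference is that you eliminate the $a$-term via the shear $(r,i)\mapsto(r+ai,i)$, whereas the paper observes directly that $aij\in JI$ (so $r-ai\in(JI:I)$ if and only if $r\in(JI:I)$), which makes the shear unnecessary but entirely harmless.
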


\begin{proof}
Let $M$ be the maximal ideal of $\RR$. It is well known that
$$\aligned
t(\RR)=&\lambda_{\RR}\left(
\text{Ext}^d_{\RR}(\RR/M,\RR)\right)=\\
=&\lambda_{\RR}\left(\Hom_{\RR}(\RR/M,\RR/H)\right)=\lambda_{\RR}\left(\frac{H:M}{H}\right)
\endaligned
$$
for any ideal $H$ generated by an $\RR$-regular sequence (see \cite[Theorem 3.1 (ii)]{R}).
By Proposition \ref{CM} we can choose $H$ generated by an $R$-regular
sequence ${\bf x}= x_1, \dots,x_d$. This means that
$H=J\RR=\{j_1+ij_2t | \ j_1,j_2 \in J, i \in I, t^2=-at-b\}$,
where $J$ is the ideal of $R$ generated by ${\bf x}$.
%it is not difficult to check that, as $R$-module, $H=J\oplus JI$.
Moreover, since $(H:M)/H$ is annihilated by $\m$, its length as $\RR$-module
coincides with its length as $R$-module (see \cite[Remark
2.2]{BDS}). Hence
$$t(\RR)=\lambda_R\left(\frac{(J\RR:M)}{J\RR}\right).$$
We want to show that
$$(J\RR:M)=
\left\{\frac{r}{s}+\frac{i}{s} \ t\ ; \ \frac{i}{s} \in (JI:\m),
\frac{r}{s} \in (JI:I) \cap (J:\m)\right\}.$$ Since $I$ is a
regular ideal, a generic element of $Q(R(I)_{a,b})$ is of the form
$r/s+(i/s)t$, where $r,s \in R, i \in I$ and $s$ is regular (cf.
Proposition \ref{basics}(3)). Therefore it is an element of
$(J\RR:M)$ if and only if
$$
\begin{array}{ll}
(r/s+(i/s)t)(m+jt)&=rm/s+(im/s)t+(rj/s)t+(ij/s)t^2=\\
&=rm/s-ijb/s+(im/s+rj/s-ija/s)t
\end{array}
$$
is an element of $J\RR$, for any $m \in \m$ and for any $j
\in I$; that is $(rm/s-ijb/s) \in J$ and $(im/s + rj/s -ija/s) \in
JI$.

Suppose that $r/s+(i/s)t \in (J\RR:M)$; in
particular, if $j=0$ we have $rm/s \in J$ and $im/s \in JI$, that
is $r/s \in (J:\m)$ and $i/s \in (JI:\m)$. Moreover, since $ja \in
I \subseteq \m$ and $i/s \in (JI:\m)$, we have $im/s,ija/s \in
JI$, hence $rj/s \in JI$ for any $j \in I$ and then $r/s \in
(JI:I)$.

Conversely, suppose that $i/s \in (JI:\m)$ and $r/s \in (JI:I)
\cap (J:\m)$. Then $rm/s-ijb/s \in J+JI=J$ and $im/s +rj/s - ija/s
\in JI+JI+JI=JI$, consequently $r/s+(i/s)t \in (J\RR:M)$.

Now it is straightforward to see that the homomorphism of
$R$-modules
$$
(J\RR:M) \ \longrightarrow \
\left(\frac{(J:\m)\cap(JI:I)}{J}\right) \times
\left(\frac{(JI:\m)}{JI}\right)
$$
defined by $r/s+(i/s)t \mapsto (r/s+J, i/s+JI)$ is surjective and
its kernel is $J\RR$. The thesis follows immediately.
\end{proof}

\begin{cor}\label{gor2} Let $R$ be a local ring of dimension $d\geq 1$ and let $I$ be a
  regular ideal of $R$. Then,
for every $a,b \in R$, the ring $\RR$ is Gorenstein if and only if
$R$ is a CM ring and $I$ is a canonical ideal of $R$.
\end{cor}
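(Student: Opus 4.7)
\medskip

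\noindent\textbf{Proof plan.} We plan to deduce the Corollary from Theorem~\ref{type0} via the criterion ``$\RR$ is Gorenstein iff $\RR$ is CM and $t(\RR)=1$''. The CM condition is equivalent, by Proposition~\ref{CM}, to $R$ being CM with $I$ a maximal CM $R$-module, which will hold on both sides of the statement.

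For the ``if'' direction we assume $R$ is CM and $I\cong\omega_R$. Two classical properties of the canonical module will make the type formula collapse: $\omega_R/J\omega_R\cong\omega_{R/J}$ is faithful as an $R/J$-module, forcing $(JI:I)=J$ and hence the vanishing of the first summand; and the socle of $\omega_{R/J}=E_{R/J}(k)$ has length $1$, so the second summand equals $1$. Hence $t(\RR)=1$ and $\RR$ is Gorenstein.

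For the ``only if'' direction we assume $\RR$ is Gorenstein, so $t(\RR)=1$. The plan is first to observe that the second summand in Theorem~\ref{type0} computes the socle length of $I/JI$, which is nonzero since $I/JI$ is a nonzero finite-length $R/J$-module; hence this summand is at least $1$, and combined with $t(\RR)=1$ this forces the first summand to vanish and the second to equal $1$. Next, using the identity $\lambda_R(I/JI)=\lambda_R(R/J)$ (valid because $I$ is an MCM module of generic rank $1$), an Artinian $R/J$-module of length $\lambda_R(R/J)$ whose socle has length $1$ must coincide with $E_{R/J}(k)=\omega_{R/J}$; so $I/JI\cong\omega_{R/J}$.

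The main obstacle will be lifting this isomorphism mod $J$ to $I\cong\omega_R$. Our plan is to apply the change-of-rings identity $\mathrm{id}_R(I)=\mathrm{id}_{R/J}(I/JI)+d=d$ to see that $I$ has finite injective dimension, then invoke Bass's theorem to conclude $I$ is a finite direct sum of copies of $\omega_R$, and finally use generic rank $1$ to pin down exactly one copy, giving $I\cong\omega_R$. This last step uses the existence of $\omega_R$, which we may ensure by first passing to the $\m$-adic completion of $R$ (both ``$\RR$ Gorenstein'' and ``$I\cong\omega_R$'' are preserved and detected under completion).
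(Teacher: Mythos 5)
Your argument is correct in substance but takes a genuinely different, and much longer, route than the paper's. The paper's proof is two lines: by Theorem~\ref{type0} the type of $\RR$ is independent of $a,b$, so $\RR$ is Gorenstein if and only if the idealization $R(I)_{0,0}\cong R\ltimes I$ is, and Reiten's classical theorem (\cite{r}; see also \cite{D'A}, \cite{Sh} for the duplication) says that $R\ltimes I$ is Gorenstein exactly when $R$ is CM and $I$ is a canonical ideal. You instead reprove the Reiten-type statement from scratch by analysing the two summands of the type formula. What this buys is self-containedness (no appeal to \cite{r}), at the cost of importing heavier machinery in the ``only if'' direction: additivity of multiplicity to get $\lambda_R(I/JI)=\lambda_R(R/J)$ (which does use that $I$ is regular, so $I_\p=R_\p$ at minimal primes), the characterization of $E_{R/J}(k)$ by simple socle plus length, the change-of-rings formula for injective dimension, Bass's theorem on maximal CM modules of finite injective dimension, and passage to the completion (with the standard descent of ``canonical module'' along completion). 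Those steps are all sound.

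One point does need patching in your ``if'' direction. In Theorem~\ref{type0} the colon $(JI:\m)$ is taken inside the total ring of fractions $Q(R)$ --- that is how it is produced in the proof of that theorem --- so $(JI:\m)/JI$ a priori contains $\soc(I/JI)=(JI:_I\m)/JI$ but could be strictly larger; your computation of the socle of $I/JI\cong\omega_{R/J}$ only gives the lower bound $1$ for the second summand. You must also check that $(JI:_{Q(R)}\m)\subseteq I$. This is quick: for $d\geq 2$ one has $(I:_{Q(R)}\m)=I$, because $\Hom_R(R/\m,Q(R)/\om)$ embeds into $\operatorname{Ext}^1_R(R/\m,\om)=0$ (as $\depth\om=d\geq 2$ and $Q(R)$ has no $\m$-torsion); for $d=1$, writing $J=(x)$, any $q$ with $q\m\subseteq xI$ satisfies $(q/x)\m\subseteq I$, hence $q\in x(I:\m)\subseteq\m(I:\m)\subseteq I$. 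With this addition your ``if'' direction closes; the ``only if'' direction is unaffected, since there you only use that the second summand dominates the socle length, which holds under either reading of the colon.
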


\begin{proof} Under our hypotheses, it is well known that the
idealization (see \cite{r}) and the duplication (see \cite{D'A}
and \cite{Sh}) produce a Gorenstein ring if and only if R is CM and $I$ is a
canonical ideal. Since a CM ring is Gorenstein if and only if its CM type is one,
the thesis follows immediately by Theorem \ref {type0}.
\end{proof}

We notice that, if $I$ is a canonical ideal of $R$,
we can apply a result of Eisenbud
(stated and proved in \cite{D'A}) to prove one
direction of the above result, i.e.  that $R(I)_{a,b}$ is
Gorenstein for every $a, b \in R$.

%Observe also that, since Gorenstein implies CM, comparing Proposition \ref{CM} with Theorem \ref{gor},
%we obtain the well known fact that a canonical ideal is a maximal CM module.

\begin{cor} \label{regular} Let $R$ be a regular local ring. The ring $\RR$ is CM if and only if it is Gorenstein.
\end{cor}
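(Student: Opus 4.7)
The plan is to combine Proposition \ref{CM} with Corollary \ref{gor2} and one structural fact about modules over regular local rings. One direction is automatic since every Gorenstein ring is Cohen--Macaulay. For the converse, I would first dispose of the dimension zero case: a zero-dimensional regular local ring is a field, which admits no nonzero proper ideal, so the standing hypothesis $I\neq 0$ forces $\dim R\geq 1$. Thus I may assume $d\geq 1$ and apply the results already proved in the section.

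Assume then that $\RR$ is Cohen--Macaulay. By Proposition \ref{CM}, $R$ is CM (automatic for a regular ring) and $I$ is a maximal Cohen--Macaulay $R$-module. Since $R$ is regular, every finitely generated $R$-module has finite projective dimension, so the Auslander--Buchsbaum formula gives
$$
\mathrm{pd}_R I = \depth R - \depth_R I = d - d = 0,
$$
i.e. $I$ is $R$-free.

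To finish, I would use that a regular local ring is a domain, so every nonzero ideal has rank one. A free ideal of rank one is generated by a single non-zerodivisor, hence $I\cong R$ as $R$-modules. But the canonical module of a regular local ring is $R$ itself, so $I$ is a canonical ideal of $R$; moreover, being principal and nonzero in a domain, it is also a regular ideal, so the hypotheses of Corollary \ref{gor2} are met and that corollary delivers the Gorensteinness of $\RR$.

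The only non-routine step is recognizing $I$ as free from its being maximal CM; once Auslander--Buchsbaum does this, the reduction to a principal ideal in the domain $R$ and the invocation of Corollary \ref{gor2} are mechanical. There is no serious obstacle beyond remembering to rule out the trivial $d=0$ case so that Corollary \ref{gor2} applies.
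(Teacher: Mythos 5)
Your proposal is correct and follows essentially the same route as the paper: Auslander--Buchsbaum to show a maximal CM ideal of a regular local ring is free, hence principal and isomorphic to $R=\omega_R$, and then Proposition \ref{CM} together with Corollary \ref{gor2}. Your explicit handling of the $d=0$ case and of the regularity of $I$ is a welcome bit of extra care that the paper leaves implicit.
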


\begin{proof} An ideal $I$ of a regular local ring is a maximal CM module if
and only if it is a principal ideal by the Auslander-Buchsbaum
formula (cf. \cite[Theorem 1.3.3]{b-h}) if and only if it is a
canonical ideal, because a regular local ring is Gorenstein.
Therefore it is enough to apply Proposition \ref{CM} and Corollary
\ref{gor2}.
\end{proof}

Notice that, if $R$ is zero-dimensional, its canonical module is
isomorphic to an ideal if and only if $R$ is Gorenstein and, in
this case, we have $\omega_R \cong R$; in any case, $\omega_R$ is
never isomorphic to a proper ideal of $R$ and therefore the next
theorem is not surprising.

\begin{thm}

Let $(R,\m)$ be a local Artinian ring.
Then $\RR$ is not Gorenstein.

\end{thm}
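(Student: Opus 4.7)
The plan is to compute $\soc(\RR)$ explicitly and show that it has dimension at least $2$ over the residue field. Since $\dim R=0$, Proposition \ref{basics}(2) gives $\dim \RR=0$, and Proposition \ref{basics}(6) says $\RR$ is local with maximal ideal $M=\m\oplus It$ and residue field $\RR/M\cong R/\m$. For a local Artinian ring, the Cohen--Macaulay type equals $\dim_{R/\m}\soc(\RR)$, so it suffices to exhibit two $R/\m$-linearly independent socle elements.

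Using the relation $i_1 i_2\, t^2 = -a\, i_1 i_2\, t - b\, i_1 i_2$ (valid in $\RR$ since $i_1 i_2\in I^2$), the product of a generic $m+jt\in M$ and a generic $r+it\in\RR$ is
\[
(m+jt)(r+it) = (mr - b\,ij) + (mi + jr - a\,ij)\,t.
\]
Requiring this to vanish for every $m\in\m$ and $j\in I$, and then specializing $j=0$ and $m=0$, yields the four conditions
\[
r\in\soc(R), \qquad i\in\soc(R), \qquad biI=0, \qquad (r-ai)I=0.
\]

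The key observation is that $I$ is a proper ideal of a local ring, so $I\subseteq\m$ and hence $\soc(R)\cdot I \subseteq \soc(R)\cdot\m = 0$. Consequently, for every $s\in\soc(R)$ the element $s+0\cdot t$ lies in $\soc(\RR)$: the only nontrivial condition, $sI=0$, follows from $sI\subseteq s\m=0$. Symmetrically, for every $u\in I\cap\soc(R)$ the element $0+ut$ lies in $\soc(\RR)$, since both $buI$ and $auI$ are contained in $u\m=0$. Both $\soc(R)$ and $I\cap\soc(R)$ are nonzero: the first because $R$ is a nonzero Artinian local ring, the second because $I$ is a nonzero submodule of the Artinian $R$-module $R$, hence has a nonzero socle. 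Choosing nonzero $s\in\soc(R)$ and nonzero $u\in I\cap\soc(R)$, the elements $s+0\cdot t$ and $0+ut$ sit in complementary $R$-summands of $\RR = R\oplus It$, so they are $R/\m$-linearly independent. This forces $t(\RR)\geq 2$, and $\RR$ is not Gorenstein.

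The only potentially delicate step is the socle calculation itself; once the multiplication formula in $\RR$ is set up correctly, the containment $I\subseteq\m$ trivializes the conditions involving $a$ and $b$ on the two coordinate subspaces of $R\oplus It$, and the existence of a nonzero socle element in each subspace is then immediate.
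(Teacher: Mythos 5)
Your proof is correct and follows essentially the same route as the paper: compute the socle of $\RR$ via the multiplication formula, observe that $I\subseteq\m$ kills the conditions involving $a$ and $b$, and produce two $k$-linearly independent socle elements, one in each summand of $R\oplus It$ (the paper takes both from a single nonzero $i\in I\cap\soc R$, namely $i$ and $it$, but this is the same argument). No gaps.
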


\begin{proof}
Since $\RR$ is an Artinian ring, it is Gorenstein if and only if its socle,
$\soc \RR =(0:_{\RR}M)$, is a $k$-vector space of dimension one, where
$k=\RR/M \cong R/\m$. We have that
$r+it \in \soc \RR$ if and only if
$$
\begin{cases}
rm-ijb=0, \\
rj+mi-aij =0
\end{cases}
$$
for any $m+jt \in M$ (i.e. for any $m \in \m$ and any $j \in I$).
In particular, if $j=0$ we get $r \in \soc R$ and $i \in I \cap
\soc R$; thus
$$
\soc \RR\subseteq \{r+it \ | \ r \in \soc R, i \in I \cap \soc R \}.
$$
It is straightforward to check the opposite inclusion, so we have an equality.
We claim that $I \cap \soc R \neq (0)$. Indeed, if $0 \neq x \in I$,
we have $x\m^n=(0)$ for some $n \in \mathbb{N}$, because $\m$ is nilpotent by artinianity.
We can assume that $x \m^{n-1} \neq (0)$ and clearly $x \m^{n-1} \subseteq I \cap \soc R$.

Consequently, if $0 \neq i \in I \cap \soc R$, we have that $i$
and $it$ are elements of $\soc \RR$ and they are linearly
independent on $k$; hence $\RR$ is not a Gorenstein ring.
\end{proof}

We end this section studying when $\RR$ is a complete intersection
(briefly, c.i.). We recall that, following \cite[Section
18.5]{E}, a local ring $R$ is a c.i. if its completion
with respect to the $\m$-adic topology $\hat R$ can be written as a
regular local ring modulo a regular sequence.

\begin{rem}\label{completion}
\rm Assume that $(R,\m)$ is local; in this case we know that also
$\RR$ is local, with maximal ideal $M=\m\oplus I$ (see Proposition
\ref{basics}(6)). Since the powers of $M$ are, as $R$-modules,
$M^n=\m^n\oplus \m^{n-1}I$ (see the proof of \cite[Proposition 2.3]{BDS}),
it is straightforward that the $M$-adic topology on $\RR$ coincides
with the $\m$-adic topology induced by the structure of $\RR$ as
$R$-module. Hence, as $R$-module, $\widehat {\RR} \cong \hat R\oplus
\hat I$.

Since we are supposing $R$ to be Noetherian, we can assume that
$R\subset \hat R$ and thus $a,b \in \hat R$; now it is clear that
$\widehat {\RR} \cong \hat R(\hat I)_{a,b}$.
\end{rem}

\begin{prop} \label{ci}
Let $R$ be a local ring and let $I$ be a  regular
ideal of $R$. The ring $\RR$ is a c.i. if and only if $R$ is a
c.i. and $I$ is a canonical ideal. In particular the property of
being a c.i. is independent of the choice of $a$ and $b$.
\end{prop}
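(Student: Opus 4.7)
The plan is to reduce via Remark \ref{completion} to the complete case, since both being a c.i.\ and having a canonical ideal pass between $R$ and $\hat R$, and then handle the two directions separately.

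For the \emph{if} direction, $R$ being c.i.\ forces it to be Gorenstein, so $\omega_R\cong R$ and $I\cong R$; hence $I=(x)$ for some non-zero-divisor $x\in R$. The $R$-algebra map $R[y]\to R(I)_{a,b}$ sending $y\mapsto xt$ gives an isomorphism $R(I)_{a,b}\cong R[y]/(y^2+axy+bx^2)$. Writing $R=S/(f_1,\dots,f_r)$ as a c.i.\ in a complete regular local ring $S$ and lifting $x$ to $\tilde x\in S$, I obtain
\[
\widehat{R(I)_{a,b}}\;\cong\;S[[y]]\big/\bigl(f_1,\dots,f_r,\,y^2+a\tilde x y+b\tilde x^2\bigr).
\]
The key step is that this is a regular sequence in $S[[y]]$: the $f_i$'s are regular in $S$ and remain so in $S[[y]]$, and the last element is monic in $y$, so $1$ is among its coefficients. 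Hence it lies outside every associated prime of $\hat R[[y]]=S[[y]]/(f_1,\dots,f_r)$ (which are all extended from $\mathrm{Ass}(\hat R)$), making it a non-zero-divisor there.

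For the \emph{only if} direction, $R(I)_{a,b}$ c.i.\ is in particular Gorenstein, and Corollary \ref{gor2} gives that $R$ is CM with $I$ canonical; the remaining task is to show $R$ is c.i. I would argue this by a minimality count in a Cohen presentation $\widehat{R(I)_{a,b}}=T/K$ with $T=S[[y_1,\dots,y_n]]$, where $n=\mu(I)$, $S$ is a complete regular local ring of dimension $\mathrm{edim}(R)$ presenting $\hat R=S/L$, and the $y_i$'s are lifts of $i_1t,\dots,i_nt$ for a minimal generating set of $I$. A direct computation of $M/M^2$ for the maximal ideal $M$ of $R(I)_{a,b}$ gives $\mathrm{edim}(R(I)_{a,b})=\mathrm{edim}(R)+n$, so $\mathrm{codim}(R(I)_{a,b})=\mathrm{codim}(R)+n$. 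The ideal $K$ contains both a lift of a minimal generating set of $L$ and, for each $1\le i\le j\le n$, the quadratic relation $y_iy_j+a\sum_{\ell}\tilde c_{ij}^{\ell}y_{\ell}+b\tilde x_i\tilde x_j$ coming from $(i_it)(i_jt)=-a(i_ii_j)t-bi_ii_j$ (where $i_ii_j=\sum c_{ij}^{\ell}i_{\ell}$). A leading-term argument in $\mathfrak m_T^2/\mathfrak m_T^3$ — exploiting that the $y_iy_j$-components of the quadratic relations are distinct basis vectors and orthogonal to the images of the $L$-lifts (which live in $\mathfrak m_S^2/\mathfrak m_S^3$) — shows these relations contribute at least $\mu(L)+\binom{n+1}{2}$ independent minimal generators to $K/\mathfrak m_TK$. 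Hence
\[
\mathrm{codim}(R)+n\;=\;\mu(K)\;\ge\;\mu(L)+\binom{n+1}{2}\;\ge\;\mathrm{codim}(R)+\binom{n+1}{2},
\]
forcing $\binom{n+1}{2}\le n$, i.e.\ $n=1$, and simultaneously $\mu(L)=\mathrm{codim}(R)$, giving $R$ c.i.

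The main obstacle I foresee is this leading-term argument: showing that the $\binom{n+1}{2}$ quadratic relations and the $L$-lifts really yield $\mu(L)+\binom{n+1}{2}$ independent minimal generators of $K$. This requires verifying that $K\subseteq\mathfrak m_T^2$ (which follows from minimality of the presentation, together with the observation that, by minimality of $i_1,\dots,i_n$, the structure constants $c_{ij}^{\ell}$ all lie in $\mathfrak m_R$) and then carefully tracking images in the graded piece $\mathfrak m_T^2/\mathfrak m_T^3$.
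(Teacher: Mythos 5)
Your overall strategy is the same as the paper's: pass to the completion, take a Cohen presentation $T=S[[y_1,\dots,y_n]]\twoheadrightarrow\widehat{\RR}$ with $y_h\mapsto i_ht$, and count minimal generators of the kernel $K$ against $\dim T-\dim\widehat{\RR}$. Your \emph{if} direction is correct and complete: reducing to $I=(x)$ principal, identifying $\RR$ with $R[y]/(y^2+axy+bx^2)$, and checking that the monic quadric is a non-zerodivisor on $\hat R[[y]]$ (its $y^2$-coefficient is a unit, so it avoids the primes $P\hat R[[y]]$, $P\in\mathrm{Ass}(\hat R)$) is essentially the paper's converse argument, phrased slightly more structurally.

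The \emph{only if} direction has a genuine gap exactly at the point you flag, and your proposed repair does not close it. The leading-form argument in $\mathfrak m_T^2/\mathfrak m_T^3$ does show that the $\binom{n+1}{2}$ quadratic relations $G_{ij}$ are part of a minimal generating set of $K$ (their classes have distinct $y_iy_j$-components, and $\mathfrak m_TK\subseteq\mathfrak m_T^3$ since $K\subseteq\mathfrak m_T^2$). But it cannot deliver the summand $\mu(L)$: minimal generators of $L$ need not have linearly independent leading forms in $\mathfrak m_S^2/\mathfrak m_S^3$ (take $L=(s_1^2,\,s_1^2+s_2^3)$, whose two minimal generators share the leading form $s_1^2$; or $L=(s_1^2,s_2^3)$, where the degree-$2$ piece of the leading-form ideal has dimension $1<\mu(L)$). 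So ``tracking images in $\mathfrak m_T^2/\mathfrak m_T^3$'' bounds the contribution of the $L$-lifts only by the number of quadrics among the minimal generators of $L$, which can be strictly less than $\mu(L)$ and even less than $\operatorname{ht}(L)=\mathrm{codim}(R)$ — and you need at least $\mathrm{codim}(R)$ there for the final inequality to force $\mu(L)=\mathrm{codim}(R)$. What is actually needed is the statement $LT\cap\mathfrak m_TK\subseteq\mathfrak m_T\,LT$ (equivalently, that $L/\mathfrak m_SL\to K/\mathfrak m_TK$ is injective); combined with the reduction modulo $\mathfrak m_ST$ (which forces the coefficients of the $G_{ij}$ into $\mathfrak m_T$, since $K$ maps onto $(y_1,\dots,y_n)^2$ in $k[[y]]$), this would give the full count $\mu(K)\ge\mu(L)+\binom{n+1}{2}$. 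That containment is the real content of the step, and it is not addressed by your argument. To be fair, this is precisely the step the paper itself treats tersely (``$\ker\varphi\cap S=J$ \dots\ it follows that $\mu(\ker\varphi)\ge\mu(J)+p$''), using only the $n$ diagonal relations $F_h=y_h^2+\alpha_hy_h+\beta_h$; your version additionally extracts $n=1$ from the count, which is a nice bonus but inherits the same unproven independence claim for the $L$-part.
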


\begin{proof}
Let $I$ be minimally generated by $i_1, \dots, i_p$. By Cohen's
structure theorem we have that $\hat R \cong S/J$, where $S$ is a
complete regular local ring. It follows that the ring
$\widehat{\RR}$ can be presented as $S[[y_1,\dots,y_p]]/\ker
\varphi$, where $\varphi: S[[y_1,\dots,y_p]]\rightarrow \widehat
\RR$ is defined by $\varphi(s)=s+J$ and $\varphi(y_h)=i_h t$, for
every $h=i,\dots, p$. Notice that $S[[y_1,\dots,y_p]]$ is again
a regular local ring.

Since $(i_h t)^2=-ai_h^2 t-bi_h^2$, with $ai_h, bi_h^2 \in \hat
R$, if we choose $\alpha_h, \beta_h \in S$ such that
$\varphi(\alpha_h)=ai_h$ and $\varphi(\beta_h)=bi_h^2$, then $\ker
\varphi$ contains the elements of the form $F_h:=y_h^2+\alpha_h
y_h+\beta_h$. Hence $\ker \varphi \supseteq J+(F_1,\dots,F_p)$.
For every index $h$, an element of the form $F_h$ is necessary as
a generator of $\ker \varphi$, since it contains a pure power of
$y_h$ of the lowest possible degree. Moreover, $\ker \varphi \cap
S= J$, since the restriction of $\varphi$ to $S$ gives the
presentation of $\hat R$. It follows that $\mu(\ker \varphi)$
(i.e. the cardinality of a minimal set of generators of $\ker
\varphi$) is bigger than or equal to $\mu(J)+p$.

Assume that $\RR$ is a c.i.; this means that $\dim S+p-\dim
\widehat \RR=\mu(\ker \varphi)$. Hence we have the following chain
of inequalities:
$$
\dim S+p-\dim \widehat \RR=\mu(\ker \varphi) \geq \mu(J)+p \geq
\dim S-\dim \hat R+p.
$$
Since $\dim \hat R=\dim \widehat \RR$, all the above inequalities
are equalities and, in particular, $\mu(J)=\dim S-\dim \hat R$,
i.e. $R$ is a c.i..

Moreover, since $\RR$ is a c.i., it is Gorenstein and
$I$ has to be a canonical ideal of $R$ by Corollary \ref{gor2}.

Conversely, assume that $R$ is a c.i. and that $I$ is a canonical
ideal of $R$. We have that $\mu(I)=1$, since it equals the type of
$\hat R$, which is Gorenstein. Using the above notation we have $\ker
\varphi \supseteq J+(F_1)$. The reverse inclusion is also true,
since, if $g(y_1) \in \ker \varphi$, its class modulo $J+(F_1)$ is
of the form $g_0+g_1y_1$ (with $g_0,g_1 \in S$) and it belongs to
$\ker \varphi$ if and only if $g_0 \in J$ and
$\varphi(g_1)i_1t=0$; the last equality, since $i_1$ is a
non-zerodivisor, implies that also $g_1 \in J$. This proves that
$\mu(\ker \varphi)=\mu(J)+1$; since $\mu(J)=\dim S -\dim \hat R$,
also $\RR$ is a c.i..
\end{proof}

\section{Almost Gorenstein property for $R(I)_{a,b}$}

Let $(R,\m)$ be a local one-dimensional Cohen-Macaulay  ring. We
say that $R$ is an {\it almost Gorenstein} ring if it has a
canonical module $\omega_R$ which is isomorphic to a fractional
ideal of $R$ such that
$$R \subseteq \omega_R \subseteq (\m:\m).$$ This definition
generalizes the first one given in \cite{b-f} for one-dimensional
ana\-lytically unramified rings and it is equivalent to the
definition given in \cite{GMP} if $R/\m$ is infinite, since, in
this case, we can assume that $R \subseteq \omega_R \subseteq
\overline R$ (see \cite[Theorem 3.11]{GMP}).
%and coincides in dimension one with the more general definition given
%in \cite{GTT} for rings of any dimension.
Thus for a local one-dimensional almost Gorenstein ring we have an
exact sequence of $R$-modules
$$ 0 \rightarrow R \rightarrow \om \rightarrow \om/R \rightarrow 0$$
with $\m\om \subseteq \m$ or,
equivalently, $\m \om \subseteq R$, i.e. $\m (\om/R)=0$.

Following \cite{GTT}, a CM local ring $(R,\m)$ of any Krull
dimension $d$ possessing a canonical module $\om$ is defined to
be {\it almost Gorenstein} if there exists an exact sequence of
$R$-modules
$$ 0 \rightarrow R  \rightarrow \om  \rightarrow C  \rightarrow
0$$ such that $\mu_R(C)=e_{\m}^0(C)$, where $\mu_R(C)=
\lambda_R(C/{\m C})$ is the number of generators of $C$ and
$e_{\m}^0(C)$ is the multiplicity of $C$ with respect to $\m$.  It
turns out that dim$_RC=d-1$.\
Thus, in dimension $d$, the condition $\m C=0$, given in dimension one,
becomes $\m C=(f_1, \dots, f_{d-1})C$,
for some $f_1, \dots, f_{d-1} \in \m$,  i.e.
$\mu_R(C)= \lambda_R (C/\m C)= \lambda_R (C/ (f_1, \dots, f_{d-1})C)  =e_{\m}^0(C)$.

Moreover, if $R$ is one-dimensional and satisfies this general
definition, we may assume that the canonical module $\om$ is a
fractional ideal of $R$, i.e. that $\om$ is a canonical ideal of
$R$, because the total ring of fractions of $R$ turns out to be a
Gorenstein ring (cf. \cite[Lemma 3.1,(1) and Remark 3.2]{GTT}). If
we also assume that $R/\m$ is infinite, the definition of
one-dimensional almost Gorenstein ring given in the beginning of this  section, which we
adopt, is equivalent to that given in \cite{GTT}, as proved in
\cite[Proposition 3.4]{GTT}.

We finally recall that, if $d=0$, a ring is almost Gorenstein
if and only if it is Gorenstein.

The goal of this section is to study when $\RR$ is almost
Gorenstein and to prove that  this property is independent also of
the choice of $a,b \in R$. We first study the one-dimensional
case, giving an explicit description of the canonical ideal of
$\RR$ and some constructive methods to get almost Gorenstein rings;
then, we study the case of dimension $d>1$.

Throughout this section we assume that $\RR$ is a CM local ring;
we recall that it is equivalent to require that $R$ is CM and
local and $I$ is  a maximal CM module (cf. Proposition \ref{CM});
we will also assume that $R/\m$ is infinite.

\subsection{The one-dimensional case}

Let $(R,\m)$ be a one-dimensional Cohen-Macaulay local ring and
$I$ be an $\m$-primary ideal of $R$. We further assume throughout
this subsection that $R$ has a canonical ideal $\omega_R$ that
is a fractional ideal such that $R \subseteq \omega_R \subseteq
\overline{R}$.

Let $H$ be a fractional ideal of $R$; since by definition there
exists a regular element $y \in R$ such that $yH=J \subset R$, we
can consider a minimal reduction $xR$ of $J$ and, with a slight
abuse of terminology, we call $xy^{-1}R$ a minimal reduction of
$H$, where now $xy^{-1}\in Q(R)$, the total ring of fractions of
$R$.

Let $zR$ (with $z \in Q(R)$) be a minimal reduction of $(\omega_R
: I)$ and let us fix this notation for the whole current
subsection; note that in this case $z$ has to be an invertible
element of $Q(R)$: in fact, if $z=xy^{-1}$ as above, $y \in
y(\om:I)$, so this is a regular ideal and a minimal reduction of a
regular ideal has to be generated by a non-zerodivisor.

The inclusion $R \subseteq R(I)_{a,b}$ is a local homomorphism and
$\RR$ is a finite $R$-module, hence the canonical module of $R(I)_{a,b}$ is
$\Hom_R(R(I)_{a,b},\omega_R)$ (by \cite[Theorem 3.3.7 (b)]{b-h}),
where the structure of $R(I)_{a,b}$-module is given by
$((r+it) \varphi)(s+jt)=\varphi((r+it)(s+jt))$, for each $\varphi \in
\Hom_R(R(I)_{a,b},\omega_R)$.

Our first goal is to give an explicit description of a canonical
ideal $K$ of $\RR$ such that $\RR \subseteq K \subseteq
\overline{\RR}$.

 Clearly, as $R$-modules,
$$
\omega_{R(I)_{a,b}} \cong {\rm Hom}_R(R(I)_{a,b},\omega_R) \cong
{\rm Hom}_R(R \oplus I, \omega_R) \cong
$$
$$
\cong {\rm Hom}_R(R,\omega_R) \oplus {\rm Hom}_R(I,\omega_R) \cong
\omega_R \oplus (\omega_R : I) \cong \frac{1}{z}(\omega_R : I)
\oplus \frac{1}{z} \omega_R.
$$

We want to see that $\frac{1}{z}(\omega_R : I) \oplus \frac{1}{z}
\omega_R$ is also an $R(I)_{a,b}$-module isomorphic to
$\omega_{R(I)_{a,b}}$. More precisely, we define
$$K= \left\{ \frac{x}{z}+ \frac{y}{z}t | \ x \in (\omega_R : I), y \in \omega_R \right\}$$
and, given  $(r+it) \in R(I)_{a,b}$ and
$(\frac{x}{z}+\frac{y}{z}t) \in K$, we set
$$
(r+it) \left( \frac{x}{z} +\frac{y}{z}t \right)= \left(
\frac{rx}{z}-\frac{biy}{z}+
\left(\frac{ry}{z}+\frac{ix}{z}-\frac{aiy}{z}\right)t\right) \in
K;
$$
it is easy to see that, in this way, we define an
$R(I)_{a,b}$-module.

\begin{prop}\label{K}
The $\RR$-module $K$, defined above, is a canonical ideal of $\RR$
such that $\RR \subseteq K \subseteq \overline{\RR}$.
\end{prop}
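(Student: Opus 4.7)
The proof has to establish three kinds of information about $K$: that its $\RR$-module structure really is the canonical one, that it sits inside the total ring of fractions $Q(\RR)$ as a fractional ideal, and that it is squeezed between $\RR$ and its integral closure. The $R$-module identifications $\omega_{\RR}=\Hom_R(\RR,\omega_R)\cong\omega_R\oplus(\omega_R:I)\cong\frac{1}{z}(\omega_R:I)\oplus\frac{1}{z}\omega_R$ carried out just before the statement already give the isomorphism on the level of $R$-modules, so the work left is to promote it to an $\RR$-linear one and to establish the two inclusions.

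The heart of the matter is the $\RR$-module isomorphism $K\cong\omega_{\RR}$. The naive bijection
$$\frac{x}{z}+\frac{y}{z}\,t\;\longleftrightarrow\;\varphi\quad\text{with}\quad\varphi(1)=y,\;\;\varphi(it)=xi,$$
does not intertwine the two $\RR$-actions when $a\neq 0$, because the product $(r+it)(jt)=-bij+(rj-aij)t$ in $\RR$ contributes an extra $-aij$ inside $\varphi$. I will correct this by inserting a compensating twist; define
$$\Phi\colon K\longrightarrow\Hom_R(\RR,\omega_R),\qquad
\Phi\!\left(\frac{x}{z}+\frac{y}{z}t\right)(r+jt)=ry+(x-ay)j.$$
From the multiplication rule on $K$ stated before the proposition, $(r+it)(x/z+(y/z)t)$ has $R$-part $(rx-biy)/z$ and $t$-part $(ry+ix-aiy)/z$; a direct expansion of both $\Phi((r+it)\alpha)(s+jt)$ and $((r+it)\Phi(\alpha))(s+jt)=\Phi(\alpha)((r+it)(s+jt))$ will show that the two agree term by term, so $\Phi$ is $\RR$-linear. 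Bijectivity is immediate, since on components $(y,x')\mapsto(y,x'+ay)$ is a two-sided inverse. Locating the correct twist and carrying out this bookkeeping is the main technical obstacle of the proof.

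The remaining points are routine. Viewing the formal expressions $x/z+(y/z)t$ as literal elements of $Q(\RR)=Q(R)[t]/(f(t))$, the defining multiplication on $K$ is just the restriction of multiplication in $Q(\RR)$, and closure under the $\RR$-action follows from $R\cdot(\omega_R:I)\subseteq(\omega_R:I)$, $I\cdot(\omega_R:I)\subseteq\omega_R$, and $\omega_R\subseteq(\omega_R:I)$ (the last from $I\omega_R\subseteq\omega_R$). For $\RR\subseteq K$, I write $r+it=(rz)/z+(iz)/z\cdot t$ and note that $rz\in(\omega_R:I)$ and $iz\in I(\omega_R:I)\subseteq\omega_R$. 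For $K\subseteq\overline{\RR}$, I invoke that $zR$ is a minimal reduction of $(\omega_R:I)$, so $(\omega_R:I)^{n+1}=z\,(\omega_R:I)^n$ for some $n$; hence for every $u\in(\omega_R:I)$ the element $u/z$ satisfies $(u/z)(\omega_R:I)^n\subseteq(\omega_R:I)^n$, and the determinantal trick applied to the finitely generated faithful $R$-module $(\omega_R:I)^n$ forces $u/z\in\overline{R}$. This gives $\frac{1}{z}(\omega_R:I)\subseteq\overline{R}$, and therefore also $\frac{1}{z}\omega_R\subseteq\overline{R}$ since $\omega_R\subseteq(\omega_R:I)$. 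Consequently $K\subseteq\overline{R}+\overline{R}\,t\subseteq\overline{R[t]/(f(t))}=\overline{\RR}$, the last equality being Proposition~\ref{basics}(4).
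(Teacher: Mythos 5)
Your proof is correct and follows essentially the same route as the paper: your twisted map $\Phi$ is exactly the paper's homomorphism $f_{(\frac{x}{z}+\frac{y}{z}t)}(s+jt)=xj+y(s-ja)$, the bijectivity and the inclusions $\RR\subseteq K\subseteq\overline{\RR}$ are obtained the same way, and where the paper cites \cite[Proposition 16]{b-f} for $(\omega_R:I)\subseteq z\overline{R}$ you supply the standard determinantal-trick argument, which is a fine substitute. The only points left implicit are the term-by-term expansion verifying $\RR$-linearity (which the paper writes out and which does check) and the exhibition of a common denominator making $K$ a fractional ideal of $\RR$ (the paper takes $riz$ with $i\in I$, $r\in R$ regular and $r\omega_R\subseteq R$); both are routine.
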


\begin{proof}
Consider the map $\varphi: K \rightarrow {\rm
Hom}_R(R(I)_{a,b},\omega_R)$ that associates with $(\frac{x}{z}
+\frac{y}{z}t)$ the homomorphism
$f_{(\frac{x}{z}+\frac{y}{z}t)}:(s+jt) \mapsto (xj+y(s-ja))$. It
is enough to prove that this is an isomorphism of
$R(I)_{a,b}$-modules. Clearly $\varphi$ is well defined. Let
$(r+it),(s+jt) \in R(I)_{a,b}$ and $(\frac{x}{z}+\frac{y}{z}t) \in
K$, one has
\begin{equation*}
\begin{split}
&\left((r+it)\varphi\left(\frac{x}{z}+\frac{y}{z}t\right)\right)(s+jt)=(r+it)f_{(\frac{x}{z}+\frac{y}{z}t)}(s+jt)= \\
&=f_{(\frac{x}{z}+\frac{y}{z}t)}((r+it)(s+jt)) =f_{(\frac{x}{z}+\frac{y}{z}t)}(rs-bij+(rj+is-aij)t)= \\
&=xrj + xis -aijx +yrs -bijy - arjy -aisy + a^2ijy=\\
&=f_{(\frac{rx-biy}{z}+\frac{ix+ry-aiy}{z}t)}(s+jt)=\varphi\left((r+it)\left(\frac{x}{z}+\frac{y}{z}t\right)\right)(s+jt).
\end{split}
\end{equation*}
This proves that $\varphi$ is an homomorphism of
$R(I)_{a,b}$-modules. Moreover, if
$f_{(\frac{x}{z}+\frac{y}{z}t)}(s+jt)=0$ for any $(s+jt) \in
R(I)_{a,b}$, chosen $\lambda \in I$ regular, one has

$$
  \begin{cases}
  y=f_{(\frac{x}{z}+\frac{y}{z}t)}(1)=0 \\
  \lambda x=f_{(\frac{x}{z}+\frac{y}{z}t)}(\lambda a + \lambda t)=0
  \end{cases}
$$

then $(x,y)=(0,0)$ and therefore $\varphi$ is injective.

As for the surjectivity, consider $g \in {\rm
Hom}_R(R(I)_{a,b},\omega_R)$. Let $\lambda \in I$ be a regular
element and set

$$
  \begin{cases}
  x=\frac{g(\lambda t)}{\lambda} + g(a) \\
  y=g(1)
  \end{cases}
$$
Clearly $y \in \omega_R$ and we claim that $x \in (\omega_R :I)$;
in fact, if $i \in I$,
$$
ix = \frac{ig(\lambda t)}{\lambda}+ig(a)=\frac{\lambda
g(it)}{\lambda} + g(ai)=g(ai+it) \in \omega_R.
$$
Hence $\frac{x}{z}+\frac{y}{z}t \in K$. Finally, for any $s+jt \in
R(I)_{a,b}$, one has
$$
f_{(\frac{x}{z}+\frac{y}{z}t)}(s+jt)=xj+y(s-ja)=\frac{g(\lambda
t)}{\lambda}j+g(aj)+g(s)-g(aj)=
$$
$$
=\frac{\lambda g(jt)}{\lambda} + g(s)=g(s+jt)
$$
and consequently $\varphi$ is surjective.

We recall that, by Corollary $1.8$ of \cite{BDS}, the integral closure
of $R(I)_{a,b}$ contains the ring
$\overline{R}[t]/(t^2+at+b)= \{r_1+r_2t| \ r_1,r_2 \in
\overline{R}, t^2=-at-b \}$.

One has $R \subseteq \frac{1}{z}(\omega_R : I)$ and $I \subseteq
\frac{1}{z} \omega_R$, because $z \in (\omega_R :I)$, thus $\RR
\subseteq K$. Moreover $\omega_R \subseteq (\omega_R : I)
\subseteq z\overline{R}$, since $z$ is a minimal reduction of
$(\omega_R : I)$ (see e.g. \cite[Proposition 16]{b-f}). Hence
$$R(I)_{a,b} \subseteq K \subseteq \overline{R}[t]/(t^2+at+b)
\subseteq \overline{R(I)_{a,b}}.$$ Finally $K$ is a fractional
ideal of $\RR$. In fact we can choose two regular elements
$i \in I$ and $r\in R$, such that $r \om \subseteq R$;
hence $riz \in R \subseteq \RR$ is such that $riz K \subseteq \RR$.
%In fact we can choose a regular element $i \in I$
%such that $ziK \subseteq \RR$ and a regular element $r\in R$ such
%that $r \om \subseteq R$. Hence $rzi \in R \subseteq \RR$ is such
%that $rzi K \subseteq \RR$.
\end{proof}

The next lemma is proved, in a different way, in the proof of
\cite[Proposition 6.1]{GMP}. If $\mathfrak{a}$ is a fractional
ideal of $R$, we denote its dual, $(\omega_R : \mathfrak{a})$, by
$\mathfrak{a}^{\vee}$.

\begin{lem} \label{Goto} Let $\mathfrak{a}, \mathfrak{b},$ and $\mathfrak{c}$
be fractional ideals of $R$ and let $z$ be a reduction of $I^{\vee}$. The following statements hold. \\
\noindent {\rm (1)} $\mathfrak{ab} \subseteq \mathfrak{c}$ if and only if $\mathfrak{ac}^{\vee} \subseteq \mathfrak{b}^{\vee}$. \\
\noindent{\rm (2)} $\m I^{\vee} \subseteq zR$ if and only if $\m \omega_R \subseteq zI$. \\
\noindent{\rm (3)} $II^{\vee}=zI$if and only if $zI^{\vee}=(I^{\vee})^2$.
\end{lem}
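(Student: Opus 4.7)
The plan is to prove (1) as a general adjunction between multiplication and dualization, and then to derive (2) and (3) as direct specializations. The essential ingredient is the biduality $(\mathfrak{a}^{\vee})^{\vee}=\mathfrak{a}$, which holds for every regular fractional ideal $\mathfrak{a}$ of $R$ in the present setting (a one-dimensional Cohen--Macaulay local ring possessing a canonical ideal $\omega_R$ with $R\subseteq\omega_R\subseteq\overline R$). This applies to each ideal I will need to bidualize, namely $I$, $I^{\vee}$, $zR$ and $zI^{\vee}$: indeed $I$ is $\m$-primary, hence contains a non-zerodivisor, and $z$ is invertible in $Q(R)$ by the remark immediately preceding Proposition~\ref{K}.

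For the forward direction of (1), if $\mathfrak{ab}\subseteq\mathfrak{c}$ and $x\in\mathfrak{c}^{\vee}$, then for every $\alpha\in\mathfrak{a}$ and every $\beta\in\mathfrak{b}$ one has $(\alpha x)\beta=x(\alpha\beta)\in x\mathfrak{c}\subseteq\omega_R$, so $\alpha x\in(\omega_R:\mathfrak{b})=\mathfrak{b}^{\vee}$; hence $\mathfrak{a}\mathfrak{c}^{\vee}\subseteq\mathfrak{b}^{\vee}$. For the converse, I apply the forward implication to the inclusion $\mathfrak{a}\mathfrak{c}^{\vee}\subseteq\mathfrak{b}^{\vee}$ with the triple $(\mathfrak{a},\mathfrak{c}^{\vee},\mathfrak{b}^{\vee})$ replacing $(\mathfrak{a},\mathfrak{b},\mathfrak{c})$. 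This yields $\mathfrak{a}(\mathfrak{b}^{\vee})^{\vee}\subseteq(\mathfrak{c}^{\vee})^{\vee}$, which collapses to $\mathfrak{ab}\subseteq\mathfrak{c}$ by biduality.

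To derive (2), I specialize (1) with $\mathfrak{a}=\m$, $\mathfrak{b}=I^{\vee}$, $\mathfrak{c}=zR$: the condition $\m I^{\vee}\subseteq zR$ is equivalent to $\m(zR)^{\vee}\subseteq(I^{\vee})^{\vee}=I$. Since $z$ is a unit of $Q(R)$, a direct computation gives $(zR)^{\vee}=(\omega_R:zR)=z^{-1}\omega_R$, so the condition becomes $z^{-1}\m\omega_R\subseteq I$, i.e.\ $\m\omega_R\subseteq zI$. For (3), the inclusions $zI\subseteq II^{\vee}$ and $zI^{\vee}\subseteq(I^{\vee})^{2}$ are automatic since $z\in I^{\vee}$; thus each of the two equalities in the statement is equivalent to its reverse inclusion. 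Applying (1) with $\mathfrak{a}=\mathfrak{b}=I^{\vee}$ and $\mathfrak{c}=zI^{\vee}$, the inclusion $(I^{\vee})^{2}\subseteq zI^{\vee}$ transforms into $I^{\vee}(zI^{\vee})^{\vee}\subseteq(I^{\vee})^{\vee}=I$; using $(zI^{\vee})^{\vee}=z^{-1}(I^{\vee})^{\vee}=z^{-1}I$, this is precisely $II^{\vee}\subseteq zI$, proving the equivalence.

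The main delicate point is not any single computation but the careful bookkeeping: one must check at each step that biduality is being applied to a regular fractional ideal, and that $z$ being a unit of $Q(R)$ justifies the identities $(zR)^{\vee}=z^{-1}\omega_R$ and $(zJ)^{\vee}=z^{-1}J^{\vee}$. Once these two observations are in place, parts (2) and (3) follow mechanically from (1) by the substitutions indicated above.
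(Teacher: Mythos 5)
Your proof is correct and follows essentially the same route as the paper: part (1) is the adjunction $\mathfrak{ab}\subseteq\mathfrak{c}\Leftrightarrow\mathfrak{a}\mathfrak{c}^{\vee}\subseteq\mathfrak{b}^{\vee}$ established via dualization and biduality (the paper phrases it as a chain of colon-ideal identities through $(\mathfrak{ab})^{\vee}=(\mathfrak{b}^{\vee}:\mathfrak{a})$, you split it into a direct element check plus biduality, which is the same content), and (2) and (3) are the same specializations, with (3) merely run in the opposite direction (the paper takes $\mathfrak{a}=I^{\vee}$, $\mathfrak{b}=I$, $\mathfrak{c}=zI$). Your explicit attention to where biduality and the regularity of the ideals are used is a welcome clarification of a point the paper leaves implicit.
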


\begin{proof}
As for the first point we have that
$$\mathfrak{ab} \subseteq \mathfrak{c} \Leftrightarrow \mathfrak{c}^{\vee}
\subseteq (\mathfrak{ab})^{\vee} \Leftrightarrow
\mathfrak{c}^{\vee} \subseteq (\mathfrak{b}^{\vee}:\mathfrak{a})
\Leftrightarrow \mathfrak{a}\mathfrak{c}^{\vee} \subseteq
\mathfrak{b}^{\vee}.
$$

The second point follows applying this to $\mathfrak{a}=\m$,
$\mathfrak{b}=I^{\vee}$, $\mathfrak{c}=zR$. In the same way we get
the last point in the particular case $\mathfrak{a}=I^{\vee}$,
$\mathfrak{b}=I$ and $\mathfrak{c}=zI$, because the other
inclusions are trivial.
\end{proof}

We can see that Proposition 6.1 of \cite{GMP}, proved for the
idealization $R \ltimes I \cong R(I)_{0,0}$, holds also for arbitrary
$a$ and $b$.

\begin{thm}\label{conditions}
The ring $R(I)_{a,b}$ is almost Gorenstein if and only if
$II^{\vee}=zI$ and $z\m=\m I^{\vee}$. In particular, almost
Gorensteinness does not depend on $a$ and $b$.
\end{thm}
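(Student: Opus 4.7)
The plan is to apply the one-dimensional definition of almost Gorensteinness to $\RR$ via the explicit canonical ideal $K$ constructed in Proposition \ref{K}. Since $\RR \subseteq K \subseteq \overline{\RR}$, the ring $\RR$ is almost Gorenstein if and only if $MK \subseteq \RR$, where $M = \m \oplus It$ denotes the maximal ideal of $\RR$.

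First I would compute a generic product using the $\RR$-module structure defined before Proposition \ref{K}: for $m\in\m$, $j\in I$, $x\in I^{\vee}$, $y\in\om$,
\[
(m+jt)\!\left(\tfrac{x}{z}+\tfrac{y}{z}\,t\right)=\tfrac{mx-bjy}{z}+\tfrac{my+jx-ajy}{z}\,t.
\]
Since $\RR = R\oplus It$, the requirement $MK\subseteq\RR$ breaks into the two scalar conditions (A) $mx-bjy\in zR$ and (B) $my+jx-ajy\in zI$, to hold for all admissible $m,j,x,y$.

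Next I would separate the roles of $m$ and $j$ by alternately setting each equal to zero. Setting $j=0$ in (A) and (B) produces $\m I^{\vee}\subseteq zR$ and $\m\om\subseteq zI$, which are equivalent by Lemma \ref{Goto}(2). Setting $m=0$ yields $bI\om\subseteq zR$, $II^{\vee}\subseteq zI$, and $aI\om\subseteq zI$. Since $I\subseteq\m$, the inclusion $\m\om\subseteq zI$ gives $I\om\subseteq zI\subseteq zR$, hence the three conditions in which $a$ or $b$ appear are automatic. Thus (A) and (B) collapse to the pair $\m I^{\vee}\subseteq zR$ and $II^{\vee}\subseteq zI$; since neither inclusion involves $a$ or $b$, this already proves the independence assertion.

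The final step is to match these inclusions with the stated equalities. The reverse inclusions $zI\subseteq II^{\vee}$ and $z\m\subseteq\m I^{\vee}$ are immediate because $z\in I^{\vee}$, so $II^{\vee}=zI$ follows at once. The key remaining obstacle is to upgrade $\m I^{\vee}\subseteq zR$ to $\m I^{\vee}\subseteq z\m$: were some element of $\m I^{\vee}$ of the form $zu$ with $u\in R$ a unit, then $z$ itself would lie in $\m I^{\vee}$, contradicting the fact that a minimal reduction of $I^{\vee}$ (over the infinite residue field $R/\m$) must belong to $I^{\vee}\setminus\m I^{\vee}$. This promotion yields $\m I^{\vee}=z\m$; reversing each step establishes the converse implication and finishes the proof.
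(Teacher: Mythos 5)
Your proof is correct and follows essentially the same route as the paper: both work with the explicit canonical ideal $K$ from Proposition \ref{K}, expand the product $(m+jt)(\frac{x}{z}+\frac{y}{z}t)$, specialize $j=0$ and $y=0$ to extract the two inclusions, and use Lemma \ref{Goto} to see that the $a$- and $b$-terms are absorbed. The only small deviation is that you test $MK\subseteq\RR$ rather than $MK\subseteq M$ (the paper's choice), which costs you the extra Nakayama-type step upgrading $\m I^{\vee}\subseteq zR$ to $\m I^{\vee}\subseteq z\m$; that step is valid, and the two starting conditions are equivalent as the paper notes when discussing the one-dimensional definition.
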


\begin{proof}
By Proposition \ref{K}, we have the canonical ideal $K$ of $\RR$
defined above. Let $M$ be the maximal ideal of $R(I)_{a,b}$.
$R(I)_{a,b}$ is almost Gorenstein if and only if $MK \subseteq M$
or, equivalently, $M zK \subseteq zM$. Given $(m+it) \in M$ and
$(x+yt) \in zK$ (i.e. $m \in \m$, $i \in I$, $x \in I^{\vee}$,  $y
\in \om$), the latter condition means that  $(m+it)(x+yt)=
mx-biy+(my+ix-aiy)t \in zM$, that is
$$
\begin{cases}
mx-biy \in z\m \\
my+ix-aiy \in zI.
\end{cases}
$$

Suppose now that $R(I)_{a,b}$ is almost Gorenstein. If we choose
$i=0$, the first equation becomes $\m I^{\vee} \subseteq z \m$,
i.e. $\m I^{\vee} = z \m$. Moreover, if in the second equation we
set $y=0$, we get $II^{\vee} \subseteq zI$, i.e. $II^{\vee}= zI$.

Conversely, if the conditions of the statement hold, in light
of the previous lemma we have
\begin{equation*}
\begin{split}
&mx-biy \in  \m I^{\vee}+I\omega_R \subseteq z\m +\m \omega_R
\subseteq z\m+zI \subseteq z\m + z \m = z \m \\
&my+ix-aiy \in  \m \omega_R + II^{\vee}+I \omega_R \subseteq zI
+zI + \m \omega_R \subseteq zI.
\end{split}
\end{equation*}
\end{proof}

%\begin{rem} \rm
%By Lemma \ref{Goto}, the condition $II^{\vee}=zI$ is equivalent to $zI^{\vee}=(I^{\vee})^2$ and we note that this happens if and only if $z^{-1}I^{\vee}$ is a ring. Indeed $zI^{\vee}=(I^{\vee})^2$ if and only if for any $x,y \in I^{\vee}$ one has $xy \in zI^{\vee}$, that is in turn equivalent to $z^{-1}xz^{-1}y \in z^{-1}I^{\vee}$ for any $z^{-1}x,z^{-1}y \in z^{-1}I^{\vee}$, i.e. $z^{-1}I^{\vee}$ is a ring.
%
%Furthermore, the condition $z\m=\m I^{\vee}$ is equivalent to
%$(\m :\m)^{\vee} \subseteq zI$.
%In fact the last one happens if and only if $(\m :\m) \supseteq z^{-1}I^{\vee}$, i.e. $(z\m :\m) \supseteq I^{\vee}$,
%and this is equivalent to $\m I^{\vee} = z\m$, because we always have $\m I^{\vee} \supseteq z\m$.
%\end{rem}
%
%In light of the previous remark and since $zI \subseteq \omega_R$, we can state the following corollary.
%

%\begin{cor}
%The ring $R(I)_{a,b}$ is almost Gorenstein if and only if \\
%$(\m : \m)^{\vee} \subseteq zI \subseteq \omega_R$ and $z^{-1}I^{\vee}$
%is a ring.
%\end{cor}

Theorem \ref{conditions} may have the following equivalent
formulation:
\begin{cor}\label{conditions2}
The ring $R(I)_{a,b}$ is almost Gorenstein if and only if
 $z^{-1}I^{\vee}$ is a ring and $R \subseteq z^{-1}I^{\vee} \subseteq (\m:\m)$.
 \end{cor}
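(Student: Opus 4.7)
The plan is to use Theorem \ref{conditions} as a black box and translate its two algebraic conditions, $II^{\vee}=zI$ and $z\m=\m I^{\vee}$, into the structural statements about the fractional ideal $B:=z^{-1}I^{\vee}$ appearing in Corollary \ref{conditions2}.

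Since $zR$ is a reduction of $I^{\vee}$, in particular $z\in I^{\vee}$, whence $R=z^{-1}(zR)\subseteq z^{-1}I^{\vee}=B$; thus the inclusion $R\subseteq B$ is automatic and only the remaining two conditions need to be matched. For the second condition of Theorem \ref{conditions}, I would observe that $B\subseteq(\m:\m)$ is by definition $\m B\subseteq\m$, which after multiplying by $z$ becomes $\m I^{\vee}\subseteq z\m$. The reverse inclusion $z\m\subseteq\m I^{\vee}$ is free from $z\in I^{\vee}$, so $B\subseteq(\m:\m)$ is equivalent to the equality $z\m=\m I^{\vee}$.

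For the first condition, I would note that $B$ is a subring of $Q(R)$ precisely when $B\cdot B\subseteq B$ (since $1\in R\subseteq B$ is already known), and multiplying by $z^{2}$ this is $(I^{\vee})^{2}\subseteq zI^{\vee}$. Once again $z\in I^{\vee}$ gives $zI^{\vee}\subseteq(I^{\vee})^{2}$ for free, so $B$ being a ring is equivalent to $(I^{\vee})^{2}=zI^{\vee}$. This is the right-hand side of Lemma \ref{Goto}(3), whose left-hand side is exactly $II^{\vee}=zI$; applying that lemma closes the loop.

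The only mildly delicate point is the bookkeeping of the automatic inclusions coming from $z\in I^{\vee}$, which is what lets each of the two desired equalities be reduced to a single non-trivial inclusion, and the invocation of Lemma \ref{Goto}(3) at the right moment to bridge between the condition on $I$ (that appears in Theorem \ref{conditions}) and the condition on $I^{\vee}$ (that encodes the ring structure of $B$). With these translations in hand, combining them with Theorem \ref{conditions} yields the equivalence stated in Corollary \ref{conditions2} directly.
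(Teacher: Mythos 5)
Your proposal is correct and follows essentially the same route as the paper: both reduce the corollary to Theorem \ref{conditions} by showing that $z^{-1}I^{\vee}$ being a ring translates (via Lemma \ref{Goto}(3)) to $II^{\vee}=zI$, and that $z^{-1}I^{\vee}\subseteq(\m:\m)$ translates to $z\m=\m I^{\vee}$, with the one-sided inclusions being automatic. The only cosmetic difference is that you get $R\subseteq z^{-1}I^{\vee}$ directly from $z\in I^{\vee}$, whereas the paper derives it from $R=(\om:\om)\subseteq(\om:zI)=z^{-1}I^{\vee}$; both are valid.
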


\begin{proof}
By Lemma \ref{Goto}, the condition $II^{\vee}=zI$ is equivalent to
$zI^{\vee}=(I^{\vee})^2$ and we note that this happens if and only
if $z^{-1}I^{\vee}$ is a ring. Indeed $zI^{\vee}=(I^{\vee})^2$ if
and only if for any $x,y \in I^{\vee}$ one has $xy \in zI^{\vee}$,
that is  equivalent to $z^{-1}xz^{-1}y \in z^{-1}I^{\vee}$ for any
$z^{-1}x,z^{-1}y \in z^{-1}I^{\vee}$, i.e. $z^{-1}I^{\vee}$ is a
ring.

Furthermore, the condition $z\m=\m I^{\vee}$ is equivalent to
 $(\m :\m) \supseteq z^{-1}I^{\vee}$, i.e. $(z\m :\m) \supseteq I^{\vee}$,
because we always have $\m I^{\vee} \supseteq z\m$. Finally, since
$zI \subseteq \om$, we have $R= (\om : \om) \subseteq (\om : zI) =
z^{-1}I^{\vee}$.

\end{proof}

Corollary \ref{conditions2} allows us to construct a large class of
one-dimensional almost Gorenstein rings. In fact, let $A$ be an
overring of $R$, $A \subseteq (\m : \m)$. Then $A^{\vee}= (\om :
A)$ is a fractional ideal of $R$. Let $r \in R$ be a regular
element such that $rA^{\vee} \subseteq R$ and set $I:= rA^{\vee}$.
It is easy to check that $I$ satisfies the conditions of
Corollary \ref {conditions2}, in fact a minimal reduction of
$I^{\vee}=r^{-1}A$ is $z=r^{-1}$ and $z^{-1}I^{\vee}=rr^{-1}A=A$.

If $A=R$, then $A^{\vee}= (\om :A)=\om$ and any integral ideal
$I=rA^{\vee}$ is a canonical ideal, giving $\RR$ Gorenstein.

If $A= (\m:\m)$, then $A^{\vee}= (\om :(\m:\m))$ and any integral
ideal of the form $r(\om :(\m:\m))$ gives $\RR$ almost Gorenstein
(cf. \cite [Corollary 6.2]{GMP}).

In particular, if $R$ is Gorenstein then there are not proper
overrings between $R$ and $(\m:\m)$. It follows that $\RR$ is
almost Gorenstein if and only if either $I=rR^{\vee}=rR$ or
$I=r(R:(\m:\m))=r\m$  (cf. \cite [Corollary 6.4]{GMP}).

\begin{ex} \label{ex1}
{\rm   Consider $R:=k[[X^4,X^5,X^{11}]]$, where $k$ is a field.
In this case  $(\m:\m)=k[[X^4,X^5,X^6,X^7]]$. If we choose the overring
$A=k[[X^4,X^5,X^7]]$ of $R$, then $A^{\vee}$ is the fractional
ideal $(X,X^4)$ of $R$ and taking for example $r=X^4$, we get
$I=X^4A^{\vee}=(X^5,X^8)$. Thus, for any choice of $a,b \in R$, we
obtain that $\RR$ is an almost Gorenstein ring.

We point out that, if $I$ and $J$ are two isomorphic ideals of
$R$, $\RR$ and $R(J)_{a,b}$ are not necessarily isomorphic. For
example, if we choose the ideal $I$ above and
$J=X^7A^{\vee}=(X^8,X^{11})$, with $a=0$, $b=-X^5$, we get
$R(I)_{0,-X^5}=k[[T_1]]$ and $R(J)_{0,-X^5}=k[[T_2]]$, with $T_1=
\langle 8,10,15,21,22 \rangle$ and $T_2=
\langle8,10,21,22,27\rangle$ (cf. \cite[Theorem 3.4]{BDS}).
However, if $J=xI$, $\RR$ is almost Gorenstein if and only if
$R(J)_{a,b}$ is almost Gorenstein: in fact, if $z$ is a reduction
of $I^{\vee}$, then $x^{-1}z$ is a reduction of $J^{\vee}$;
moreover $ (x^{-1} z)^{-1} J^{\vee}= x
z^{-1}J^{\vee}=z^{-1}I^{\vee}$, so, by previous corollary, the
conditions required to be almost Gorenstein coincide for both rings. }
\end{ex}

If $R$ is a numerical semigroup ring or an algebroid branch, it is
possible to get information about $\RR$ by studying a numerical
semigroup, called {\it numerical duplication} (see \cite[Theorems
3.4 and 3.6]{BDS}). In numerical semigroup theory, the
corresponding concept of almost Gorenstein ring is the notion of
almost symmetric semigroup and, in this context, Corollary
\ref{conditions2} generalizes Theorem 4.3 of \cite{DS}. Moreover,
in this case a simple formula is known for the type of the
numerical duplication (see \cite[Proposition 4.8]{DS}). The next
proposition generalizes this result, giving a formula for
$t(\RR)$, the CM type of $\RR$.

 \begin{prop} \label{type2}
Suppose that $\RR$ is almost Gorenstein, then
$$
t(\RR)= 2\lambda_R \left(\frac{z^{-1}I^{\vee}}{R}\right)+1 =
2\lambda_R \left(\frac{\omega_R}{zI}\right)+1.
$$
\end{prop}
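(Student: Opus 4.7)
The strategy is to exploit the canonical ideal $K$ of $\RR$ described in Proposition \ref{K} together with the standard identity $t(\RR) = \mu_{\RR}(K) = \dim_{R/\m}(K/MK)$, valid for any one-dimensional Cohen--Macaulay local ring possessing a canonical module. The whole argument then reduces to identifying $MK$ explicitly and computing the length of the quotient $K/MK$.

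The main step, and the one I expect to be the principal obstacle, is to show that $MK = M$. The inclusion $MK \subseteq M$ is essentially the content of Theorem \ref{conditions}: it requires expanding the generic product $(m+it)(\tfrac{x}{z}+\tfrac{y}{z}t)$ and verifying that every cross term lands in $z\m$ or $zI$; to this end one uses the two AG equalities $\m I^{\vee} = z\m$ and $II^{\vee} = zI$, together with the consequences $I^{\vee} \supseteq \om$ (so $I\om \subseteq II^{\vee} = zI$) and $\m\om \subseteq zI$ (via Lemma \ref{Goto}). For the reverse inclusion, the products $m\cdot\tfrac{x}{z}$ with $m \in \m$, $x \in I^{\vee}$ run through $\tfrac{1}{z}\m I^{\vee} = \m$, and the products $(it)\cdot\tfrac{x}{z} = \tfrac{ix}{z}t$ sweep out $\tfrac{1}{z}(II^{\vee})\,t = It$; hence $MK \supseteq \m + It = M$.

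Once $MK = M$ has been established, the filtration $M \subset \RR \subset K$ gives $\lambda_R(K/M) = \lambda_R(K/\RR) + \lambda_R(\RR/M) = \lambda_R(K/\RR) + 1$. As $R$-modules, the direct-sum decompositions $K = \tfrac{1}{z}I^{\vee} \oplus \tfrac{1}{z}\om\,t$ and $\RR = R \oplus It$ are compatible (because $z \in I^{\vee}$ gives $R \subseteq \tfrac{1}{z}I^{\vee}$, and $zI \subseteq \om$ gives $It \subseteq \tfrac{1}{z}\om\,t$), so
$$
\lambda_R(K/\RR) = \lambda_R(z^{-1}I^{\vee}/R) + \lambda_R(\om/zI).
$$
The last ingredient is the equality $\lambda_R(z^{-1}I^{\vee}/R) = \lambda_R(\om/zI)$, which follows from canonical duality: the length-preserving functor $(\om:\_)$ sends the inclusion $R \subseteq z^{-1}I^{\vee}$ to $zI = z\,I^{\vee\vee} \subseteq \om$, where $I^{\vee\vee}=I$ because $I$ is a maximal Cohen--Macaulay $R$-module. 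Assembling all the pieces produces $t(\RR) = 2\lambda_R(\om/zI) + 1 = 2\lambda_R(z^{-1}I^{\vee}/R) + 1$, as desired.
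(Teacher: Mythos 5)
Your proof is correct, but it takes a genuinely different route from the paper's. The paper starts from the colon-ideal formula $t(\RR)=\lambda_R\bigl(((I:I)\cap(R:\m))/R\bigr)+\lambda_R\bigl((I:\m)/I\bigr)$ of \cite[Theorem 3.2]{BDS}, then uses the two almost Gorenstein equalities of Theorem \ref{conditions} to identify $(I:I)=z^{-1}I^{\vee}\subseteq(R:\m)$ and to rewrite $\lambda_R\bigl((I:\m)/I\bigr)$ as $\lambda_R\bigl(z^{-1}I^{\vee}/\m\bigr)$ by canonical duality. You instead compute the type as $\mu_{\RR}(K)=\lambda\bigl(K/MK\bigr)$ for the explicit canonical ideal $K$ of Proposition \ref{K}, observe that almost Gorensteinness forces $MK=M$ (note that the reverse inclusion $M\subseteq MK$ is immediate from $1\in\RR\subseteq K$, so your componentwise verification of it is superfluous), and then split $\lambda(K/M)=\lambda(K/\RR)+1$ along the compatible $R$-module decompositions $K=z^{-1}I^{\vee}\oplus z^{-1}\om\,t$ and $\RR=R\oplus It$; duality, together with $I^{\vee\vee}=I$, supplies the final identification $\lambda_R\bigl(z^{-1}I^{\vee}/R\bigr)=\lambda_R\bigl(\om/zI\bigr)$, which the paper needs as well. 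Both arguments rest on the same two pillars --- Theorem \ref{conditions} and the length-preserving functor $(\om:-)$ --- but yours has the merit of making the shape of the answer transparent: the summand $1$ is $\lambda(\RR/M)$, and the two equal summands are the lengths of the two components of $K/\RR$, which are canonically dual to one another. The only external fact you invoke beyond what the paper uses is $t(\RR)=\mu_{\RR}(\omega_{\RR})$, which is standard for Cohen--Macaulay local rings with canonical module, so the argument is complete.
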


\begin{proof} First recall that the CM type of $\RR$ is
$$
t(\RR)=\lambda_R\left(\frac{(I:I)\cap(R:\m)}{R}\right) +
\lambda_R\left(\frac{(I:\m)}{I}\right)
$$
by \cite[Theorem 3.2]{BDS}.
% where $\lambda_R(\cdot)$ denotes the
%length as $R$-module.

We note that $(I:I) \subseteq (R:\m)$. Indeed it is easy to see
that $z(I:I) \subseteq (\omega_R : I) \subseteq z(R:\m)$, because
$\m I^{\vee} =z\m$ by Theorem \ref{conditions}.

Moreover $(I:I)=((\omega_R : (\omega_R :I)):I)=((\omega_R :
I^{\vee}) : I)=(\omega_R:II^{\vee})=z^{-1}I^{\vee}$, because
$zI=II^{\vee}$ again by Theorem \ref{conditions}.

Finally, to conclude the proof, it is enough to show that
$\lambda_R((I:\m)/I)=\lambda_R(z^{-1}I^{\vee}/\m)$. This holds
because
\begin{equation*}
\begin{split}
\lambda_R
\left(\frac{(I:\m)}{I}\right)&=\lambda_R\left(\frac{((\omega_R:(\omega_R:I)):\m)}{I}\right)=
\lambda_R\left(\frac{(\omega_R: \m I^{\vee})}{I}\right)= \\
&=\lambda_R \left(\frac{(\omega_R:I)}{ \m I^{\vee}}\right)=
\lambda_R \left(\frac{I^{\vee}}{z\m}\right)= \lambda_R\left(\frac{z^{-1}I^{\vee}}{\m}\right),
\end{split}
\end{equation*}
where, since $R(I)_{a,b}$ is almost Gorenstein, we used  $\m
I^{\vee}=z \m$ (cf. Theorem \ref{conditions}).
\end{proof}

By Corollary \ref{regular}, if $R$ is a DVR and $\RR$ is almost Gorenstein,
it follows that $\RR$ is Gorenstein, i.e. has type $1$.
On the other hand if we assume that $R$ is not a DVR, the type of $R$ is the length
of $(\m: \m)/R$ and then, since in the almost Gorenstein case
$z^{-1}I^{\vee} \subseteq (\m : \m)$ (cf. Corollary
\ref{conditions2}), the previous proposition implies the
following.

\begin{cor} \label{type}
If $\RR$ is almost Gorenstein, its type
is odd and $1 \leq t(\RR) \leq 2t(R)+1$.
\end{cor}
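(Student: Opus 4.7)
My plan is to read the corollary directly off Proposition \ref{type2}, which gives the formula $t(\RR) = 2\lambda_R(z^{-1}I^{\vee}/R) + 1$. The parity statement is immediate from this formula, and the lower bound $t(\RR) \geq 1$ follows from the inclusion $R \subseteq z^{-1}I^{\vee}$ furnished by Corollary \ref{conditions2}, which guarantees that $\lambda_R(z^{-1}I^{\vee}/R)$ is a nonnegative integer.

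The work lies in the upper bound $t(\RR) \leq 2t(R) + 1$, for which I will split into two cases. If $R$ is a DVR, then, since $\RR$ is CM by the standing hypothesis of this section, Corollary \ref{regular} forces $\RR$ to be Gorenstein, so $t(\RR) = 1 \leq 3 = 2t(R) + 1$. This case is trivial but must be handled separately because the identity used in the other case degenerates when $\m$ is principal. If $R$ is not a DVR, I will invoke the classical fact that $(R:\m) = (\m:\m)$ in a one-dimensional CM local ring with non-invertible maximal ideal, which yields $t(R) = \lambda_R((R:\m)/R) = \lambda_R((\m:\m)/R)$. Combined with the chain $R \subseteq z^{-1}I^{\vee} \subseteq (\m:\m)$ from Corollary \ref{conditions2}, this gives $\lambda_R(z^{-1}I^{\vee}/R) \leq t(R)$, and substituting into the formula of Proposition \ref{type2} delivers $t(\RR) \leq 2t(R) + 1$.

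The main obstacle, modest as it is, is the identity $t(R) = \lambda_R((\m:\m)/R)$ in the non-DVR case, which is not used elsewhere in this section. It is worth spelling out that it follows from the equality $(R:\m) = (\m:\m)$: if $\m$ is not principal (equivalently, not invertible in the total ring of fractions), then $\m(R:\m) \neq R$, forcing $\m(R:\m) \subseteq \m$ and hence $(R:\m) \subseteq (\m:\m)$; the reverse inclusion is automatic from $\m \subseteq R$. With this in hand, the whole argument is essentially one line after Proposition \ref{type2}.
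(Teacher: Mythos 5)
Your proposal is correct and follows the paper's own argument essentially verbatim: the parity and lower bound are read off the formula of Proposition \ref{type2}, the DVR case is dispatched via Corollary \ref{regular}, and the upper bound in the non-DVR case comes from $t(R)=\lambda_R\bigl((\m:\m)/R\bigr)$ together with the inclusion $z^{-1}I^{\vee}\subseteq(\m:\m)$ from Corollary \ref{conditions2}. The only difference is that you spell out the justification of $(R:\m)=(\m:\m)$, which the paper takes as known.
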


\begin{ex}{\rm In Example \ref{ex1} we get $t(\RR)=2\lambda_R(A/R)+1=3$.
Observe that in this example $t(R)=2$
and all the odd values $t$, $1 \leq t \leq 5=2t(R)+1$
can be realized for $t(\RR)$. In fact for example for $I=(X^4,X^5)$,
which is a canonical ideal of $R$, we get $t(\RR)=1$ and for
$I=(X^4,X^5,X^6)=X^4(\om :(\m:\m))$ we get $t(\RR)=5$. }
\end{ex}

Since the almost Gorensteinness does not depend on $a$ and $b$,
from \cite[Theorem 6.5]{GMP} we get the following proposition, of
which we include also a simple proof.

\begin{prop} The ring $R$ is almost Gorenstein if and only if $R(\m)_{a,b}$ is almost Gorenstein.
In this case, if $R$ is not a DVR, the type of $R(\m)_{a,b}$ is $2t(R)+1$.
\end{prop}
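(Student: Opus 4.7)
The plan is to specialize Theorem \ref{conditions} to $I=\m$ and observe that the two hypothesized conditions collapse to one. With $I=\m$, both conditions $II^{\vee}=zI$ and $z\m=\m I^{\vee}$ become $\m\m^{\vee}=z\m$, so $R(\m)_{a,b}$ is almost Gorenstein iff $\m\m^{\vee}=z\m$. By Corollary \ref{conditions2}, this reads: $z^{-1}\m^{\vee}$ is a ring with $R\subseteq z^{-1}\m^{\vee}\subseteq(\m:\m)$. A brief check shows $(\m:\m)\subseteq z^{-1}\m^{\vee}$ always, since $(\m:\m)\cdot\m\subseteq\m\subseteq\om$ gives $z(\m:\m)\subseteq(\om:\m)=\m^{\vee}$; thus the AG condition for $R(\m)_{a,b}$ becomes the equality $z^{-1}\m^{\vee}=(\m:\m)$.

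Next, I would argue this equality is equivalent to $R$ being almost Gorenstein, splitting into cases. If $R$ is a DVR, then $R$ is Gorenstein, $\m$ is a canonical ideal, and Corollary \ref{gor2} yields that $R(\m)_{a,b}$ is Gorenstein---both sides are automatic. If $R$ is not a DVR, I would show $z=1$ can be chosen as a minimal reduction of $\m^{\vee}$: one has $1\in\m^{\vee}$ since $\m\subseteq\om$, and in the non-DVR case $\m^{\vee}\subseteq\overline R$ (because $\m^{-1}=(\m:\m)\subseteq\overline R$ in the non-invertible case, so $\m^{\vee}=\om\cdot\m^{-1}$ lies inside $\overline R$), so the ascending chain of powers $(\m^{\vee})^n$ stabilizes within the finite $R$-module $\overline R$ and $1\cdot(\m^{\vee})^n=(\m^{\vee})^{n+1}$ for $n\gg 0$. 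With $z=1$, the condition $\m^{\vee}=(\m:\m)$ is equivalent to $\m\m^{\vee}\subseteq\m$; and since $\m\m^{\vee}$ is an $R$-ideal containing $\m$ that cannot equal $R$ (that would force $\m$ invertible), this amounts to $\m\m^{\vee}\subseteq R$. By Lemma \ref{Goto}(1) with $\mathfrak a=\m,\mathfrak b=\om,\mathfrak c=\m$, this is equivalent to $\m\om\subseteq\m$, i.e., $R$ is almost Gorenstein.

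For the type, when $R$ is AG and not a DVR, Proposition \ref{type2} gives $t(R(\m)_{a,b})=2\lambda_R(z^{-1}\m^{\vee}/R)+1$. Substituting $z=1$ and the equality $\m^{\vee}=(\m:\m)$ established above yields $2\lambda_R((\m:\m)/R)+1$. Using the identification $(\m:\m)=(R:\m)=\m^{-1}$ for non-DVR one-dimensional Cohen--Macaulay local rings, together with the standard socle computation $\lambda_R((R:\m)/R)=t(R)$ modulo a non-zerodivisor of $\m$, the type equals $2t(R)+1$. The main obstacle I anticipate is justifying $z=1$ as a minimal reduction of $\m^{\vee}$ in the non-DVR case, which requires both $\m^{\vee}\subseteq\overline R$ and stabilization of the chain of powers under module-finiteness of $\overline R$ over $R$; if this step proves delicate, one can instead keep $z$ arbitrary throughout and use Lemma \ref{Goto}(2) to pass between $\m\m^{\vee}\subseteq zR$ and $\m\om\subseteq z\m$ directly.
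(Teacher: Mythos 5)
Your proposal is correct and rests on the same core mechanism as the paper's proof --- specialize Theorem \ref{conditions}/Corollary \ref{conditions2} to $I=\m$, show that $z=1$ is a minimal reduction of $\m^{\vee}=(\om:\m)$ when $R$ is not a DVR, and identify $z^{-1}\m^{\vee}$ with $(\m:\m)$ --- but the way you connect this to the almost Gorensteinness of $R$ is organized differently and is arguably cleaner. The paper runs the two implications separately: for the forward direction it proves $(\om:\m)=(\m:\m)$ by a length count ($\lambda_R((\om:\m)/\om)=1$ together with the chain of equalities giving $\lambda_R((\m:\m)/\om)=1$), and for the converse it establishes $(\om:\m)\subseteq\overline R$ via the irreducibility of $\om$ and then invokes \cite[Proposition 16]{b-f} to get $z=1$. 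You instead observe that $(\m:\m)\subseteq z^{-1}\m^{\vee}$ holds unconditionally, which collapses both directions into the single equivalence ``$R(\m)_{a,b}$ almost Gorenstein iff $z^{-1}\m^{\vee}=(\m:\m)$,'' and you then translate this into ``$R$ almost Gorenstein'' through $\m\m^{\vee}\subseteq R\Leftrightarrow\m\om\subseteq\m$, i.e.\ via Lemma \ref{Goto}(1) --- a duality step the paper does not use here. Two small points deserve care: the identity $\m^{\vee}=\om\cdot\m^{-1}$, which you use to get $\m^{\vee}\subseteq\overline R$, is not formal --- only $\om\m^{-1}\subseteq(\om:\m)$ is automatic, and the reverse inclusion needs $\lambda_R((\om:\m)/\om)=\lambda_R(R/\m)=1$ together with $\om\m^{-1}\supsetneq\om$ in the non-DVR case (alternatively, use the paper's irreducibility argument); and the stabilization of the chain $(\m^{\vee})^n$ should be justified by noting that $R[\m^{\vee}]$ is module-finite over $R$ because $\m^{\vee}$ is a finitely generated fractional ideal consisting of integral elements, rather than by assuming $\overline R$ itself is a finite $R$-module, which is not part of the standing hypotheses. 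With these details filled in, the argument is complete and the type computation at the end matches the paper's.
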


\begin{proof}We have that $R$ is a DVR if and only if $R(\m)_{a,b}$
is Gorenstein (cf. Corollary \ref{gor2}). Thus we can exclude this case.
If $R$ is almost Gorenstein and not a DVR, then $(\om:
\m)=(\m:\m)$: in fact,
$\lambda_R((\om:\m)/\om)=\lambda_R(R/\m)=1$; moreover, denoting by
$t(R)$ the CM type of $R$, we obtain the following chain of
equalities
$t(R)+1=\lambda_R((\m:\m)/\m)=\lambda_R((\m:\m)/\om)+\lambda_R(\om/\m)=
\lambda_R((\m:\m)/\om)+\lambda_R(\om/\m\om)=\lambda_R((\m:\m)/\om)+t(R)$,
that implies $\lambda_R((\m:\m)/\om)=1$ (cf.
\cite[Definition/Proposition 20]{b-f}). So if
 $I=\m$, then $z=1$, $z^{-1}I^{\vee}= (\m:\m)$   and, by Corollary
\ref{conditions2}, $R(\m)_{a,b}$ is almost Gorenstein.

Conversely, if $R(\m)_{a,b}$  is almost Gorenstein but not
Gorenstein, then $z^{-1} (\om:\m) \subseteq (\m:\m)$ (Corollary
\ref{conditions2}). Moreover it is well known that in dimension
one $\om$ is an irreducible fractional ideal and
$\lambda_R((\om:\m)/ \om)=1$. Thus $(\om:\m) \subset \bar R$
(otherwise, if $x \in \bar R \setminus  \om$, we have $\om =
(\om:\m) \cap (\om,x)$, a contradiction) and by \cite [Proposition
16]{b-f} $z=1$ is a minimal reduction of $(\om:\m)$. So $\om
\subseteq (\om:\m) \subseteq (\m:\m)$ and $R$ is almost
Gorenstein.

As for the last part of the statement, we have already proved that
in this case $z^{-1}(\om:\m)=(\m:\m)$;
then it is enough to apply the formula of Proposition \ref{type2}.
\end{proof}

%
%\begin{cor} The following statements hold: \\
%{\bf 1.} If $I=(\omega : r(\m : \m))$,
%where $r$ is a regular element of $R$, the ring $R(I)_{a,b}$ is almost Gorenstein. \\
%{\bf 2.} If $R$ is Gorenstein, then $R(I)_{a,b}$ is almost Gorenstein if
%and only if $I=rR$ or $I=r\m$, for some regular element $r \in R$. \\
%{\bf 3.} $R$ is almost Gorenstein if and only if $R(\m)_{a,b}$ is almost Gorenstein.
%\end{cor}
%
%\begin{proof} Since the almost Gorensteinness does not depend on $a$ and $b$, the last two points follows immediately from \cite[Corollary 6.4 and Theorem 6.5]{GMP}.
%As for the last point, we note that $I^{\vee}= r (\m:\m)$ and $(\m:\m)^2=(\m:\m)$, so $r$ is a minimal reduction of $I^{\vee}$. Hence $\RR$ is almost Gorenstein because $r I^{\vee}= r^2(\m : \m)= (I^{\vee})^2$ and $r \m= \m r (\m: \m)$.
%\end{proof}
%
%

\subsection{The general case}

Let $(R,\m)$ be a Cohen-Macaulay local ring of dimension $d$,
with canonical module $\omega_R$.
%and let $I$ be a proper regular
%ideal of $R$, such that $R/I$ is a CM ring.
The goal of this
subsection is to prove that the property of being almost
Gorenstein for $\RR$ is independent of the choice of $a$ and $b$
also in the case $d>1$. We recall we are assuming that $R/\m$
is infinite.
The next lemma is proved e.g. in the proof of \cite[Proposition 3.3.18]{b-h},
but we include the short proof for the sake of completeness.

\begin{lem} \label{height}
Let $I$ be a regular ideal and a maximal CM $R$-module. Then $I$ has height one and
$R/I$ is a Cohen-Macaulay ring of dimension $d-1$.
\end{lem}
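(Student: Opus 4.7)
The plan is to extract both conclusions directly from the short exact sequence $0 \to I \to R \to R/I \to 0$, by combining the depth lemma with the observation that a regular element in $I$ bounds the Krull dimension of the quotient from above.

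First, I will apply the depth lemma to the short exact sequence above. Since $R$ is a CM local ring of dimension $d$, we have $\depth R = d$, and by hypothesis $I$ is a maximal CM $R$-module, so $\depth_R I = d$ as well. The standard depth estimate for the right-hand term in a short exact sequence then gives
\[
\depth_R(R/I)\;\geq\; \min\{\depth_R I - 1,\ \depth R\}\;=\; d-1.
\]

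Next, since $I$ is regular, it contains a non-zerodivisor $x \in R$. Because $R$ is CM, $x$ is part of a system of parameters, so the principal ideal $(x)$ has height one and $\dim R/(x) = d-1$. The inclusion $(x)\subseteq I$ gives $V(I)\subseteq V(x)$, hence $\dim R/I \leq d-1$. Combining the two bounds yields
\[
d-1\;\leq\;\depth_R(R/I)\;\leq\;\dim R/I\;\leq\; d-1,
\]
so all three quantities equal $d-1$. Therefore $R/I$ is CM of dimension $d-1$, and since $R$ is a CM (hence catenary) local ring, $\height I = \dim R - \dim R/I = 1$.

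The argument is short and essentially mechanical; there is no real obstacle, the only subtlety being to invoke the correct form of the depth lemma and to remember that, in a CM ambient ring, containing a non-zerodivisor is already enough to force $\dim R/I \leq d-1$.
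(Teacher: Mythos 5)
Your proof is correct and follows essentially the same route as the paper's: both apply the depth lemma to $0 \to I \to R \to R/I \to 0$ to get $\depth R/I \geq d-1$, and both use the regularity of $I$ to force $\dim R/I \leq d-1$ (the paper via $\height I \geq 1$ and the CM dimension formula, you via a concrete non-zerodivisor $x \in I$ — an immaterial difference).
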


\begin{proof}
Since $I$ is a maximal CM $R$-module, $\depth I = \depth R$;
moreover $I$ has positive height, because is regular.
Hence the depth lemma \cite[Proposition 1.2.9]{b-h} implies that
\begin{gather*}
\dim R -1 \geq \dim R - \height I = \dim R/I \geq \depth R/I \geq \\
\geq \min \{\depth I -1,\depth R\} =\depth R -1 = \dim R -1.
\end{gather*}
Therefore all inequalities above are equalities and the thesis
follows immediately.
\end{proof}

The following lemma allows us to reduce
to the one-dimensional case.

\begin{lem}\label{lemma}
Let $x$ be an element of the ring $R$, that determines a
non-zerodivisor on $R/I$ (i.e. $(I:x)=I$); then
$$
\frac{\RR}{x\RR} \cong \frac{R}{xR}\left(
\frac{I+xR}{xR}\right)_{\overline{a},\overline{b}}
$$
where $\overline{a}$ and $\overline{b}$ are the images of $a$ and $b$ in $R/xR$.
\end{lem}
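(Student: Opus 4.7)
The plan is to construct an explicit surjective ring homomorphism $\phi\colon \RR \twoheadrightarrow (R/xR)\bigl((I+xR)/xR\bigr)_{\bar a,\bar b}$ given by reducing coefficients modulo $x$, and then to verify that $\ker\phi = x\RR$. Throughout, write $\bar R = R/xR$ and $\bar I = (I+xR)/xR$.

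Using the $R$-module description $\RR \cong R \oplus I$ from Proposition~\ref{basics}(1), I would define $\phi(r + it) = \bar r + \tilde\imath\, t$, where $\tilde\imath$ denotes the class of $i$ in $\bar I$. The cleanest way to see that $\phi$ is a ring homomorphism is to observe that the natural reduction $R[t] \to \bar R[t]$ sends $t^2 + at + b$ to $t^2 + \bar a t + \bar b$ and therefore descends to a ring map $R[t]/(t^2+at+b) \to \bar R[t]/(t^2+\bar a t + \bar b)$; by Proposition~\ref{basics}(2), $\RR$ embeds in the first ring and $\bar R(\bar I)_{\bar a,\bar b}$ in the second, and $\phi$ is simply the restriction of this reduction. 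Surjectivity is then immediate, since if $I$ is generated by $i_1,\dots,i_p$, the $R$-algebra generators $i_ht$ of $\RR$ map onto corresponding generators of the target.

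The crux is to identify $\ker\phi$ with $x\RR$. Directly from the definition one has $\ker\phi = \{r + it : r \in xR \text{ and } i \in xR \cap I\}$, while $x\RR = \{xs + (xj)t : s \in R,\, j \in I\}$, so the inclusion $x\RR \subseteq \ker\phi$ is immediate. The reverse inclusion reduces to showing that $xR \cap I = xI$: if $xr \in I$, then $\bar x \cdot \bar r = 0$ in $R/I$, and the hypothesis $(I:x)=I$ forces $r \in I$, whence $xr \in xI$. This is the sole place where the non-zerodivisor hypothesis enters the argument, and it is really the only obstacle; once it is in place, any $r+it \in \ker\phi$ can be rewritten as $x(s+jt) \in x\RR$ with $r = xs$ and $i = xj$, and the desired isomorphism follows from the first isomorphism theorem.
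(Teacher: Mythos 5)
Your proof is correct and follows essentially the same route as the paper: the same reduction map $r+it \mapsto \bar r + \tilde\imath\,t$, with the kernel identified as $x\RR$ via the key identity $I \cap xR = xI$ coming from the hypothesis $(I:x)=I$. The only difference is that you spell out why the map is a ring homomorphism (via the reduction $R[t]\to \bar R[t]$), which the paper leaves as a routine check.
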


\begin{proof}
It is not difficult to check that there is a surjective ring
homomorphism $\alpha: \RR \rightarrow \frac{R}{xR}\left(
\frac{I+xR}{xR}\right)_{\overline{a},\overline{b}} $ defined by
$r+it \mapsto (r+xR) + (i+xR)t$. The assumption on $x$ implies
that $I \cap xR=xI$; hence $i \in xR$ if and only if $i=xj$ with
$j \in I$; therefore $\Ker (\alpha)= x\RR$\ and we obtain the thesis.
\end{proof}

%We notice that, under the hypotheses of the previous lemma, we
%have  $(I+xR)/xR \cong I/(I\cap xR) = I/xI$; moreover, we can
%always write an element of $(I+xR)/xR$ as $i+xR$ and, since $i-j
%\in xR$ if and only if $i-j \in xI$ (for any $i,j \in I$), we have
%the equality $(I+xR)/xR = I/xI$ and we can write $\frac{\RR}{x\RR}
%\cong
%\frac{R}{xR}\left(\frac{I}{xI}\right)_{\overline{a},\overline{b}}$\
%.

In the next proposition we will use some results about filtrations
and superficial elements that can be found, e.g, in \cite[Chapter
1]{RV}.

\begin{prop}\label{AGd>1}
Let $d\geq1$ and let $I$ be a regular ideal of $R$. Then the almost Gorensteinness of $\RR$ does not depend on
the choice of $a$ and $b$.
\end{prop}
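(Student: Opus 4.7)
My plan is to induct on $d$ and reduce to the one-dimensional case, which is already settled by Theorem \ref{conditions}. The base case $d=1$ is thus immediate, so assume $d\geq 2$ and that the statement is known in dimensions $<d$.

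The key step is to pick an element $x\in\mathfrak m$ enjoying three properties: (i) $x$ is a non-zerodivisor on $R$ (hence on $\RR$, since the extension $R\subseteq\RR$ is integral and $\RR$ is free of rank $2$ as an $R$-module); (ii) $(I:x)=I$, so that Lemma \ref{lemma} applies; (iii) $x$ is sufficiently generic—a super-regular element with respect to $R$ and $\omega_R$—so that $\RR$ is almost Gorenstein if and only if $\RR/x\RR$ is almost Gorenstein. The existence of such an $x$ is ensured by the standing hypothesis that $R/\mathfrak m$ is infinite: for (i) we must avoid the associated primes of $R$, for (ii) we must avoid the associated primes of $R/I$ (which is CM of positive dimension $d-1$ by Lemma \ref{height}, so it has positive depth and in particular $\mathfrak m$ contains non-zerodivisors on it), and for (iii) we must avoid a finite union of proper subspaces of $\mathfrak m/\mathfrak m^2$, which is possible since $R/\mathfrak m$ is infinite. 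This is the classical superficial element construction used in \cite{GTT} to study the behavior of the almost Gorenstein property modulo a regular element.

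Once $x$ is fixed, Lemma \ref{lemma} yields
\[
\RR/x\RR \;\cong\; \bar R(\bar I)_{\bar a,\bar b},
\]
where $\bar R=R/xR$ is a CM local ring of dimension $d-1$ with infinite residue field and canonical module $\omega_R/x\omega_R$, and where $\bar I=(I+xR)/xR\cong I/xI$ is a regular ideal of $\bar R$ that is also a maximal CM $\bar R$-module (the latter because $x$ is regular on both $R$ and $R/I$, hence on $I$ and on $I/xI$). By the inductive hypothesis applied to $\bar R$ and $\bar I$, the almost Gorensteinness of $\bar R(\bar I)_{\bar a,\bar b}$ depends only on $\bar R$ and $\bar I$, not on $\bar a,\bar b$. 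Combined with (iii), this forces the almost Gorensteinness of $\RR$ itself to be independent of $a$ and $b$, completing the induction.

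The main obstacle is justifying (iii), namely that almost Gorensteinness of $\RR$ descends to and lifts from $\RR/x\RR$ for a suitably generic $x$. This is not a formality from the definition of \cite{GTT}: the descent direction asks that the exact sequence $0\to\RR\to\omega_{\RR}\to C\to 0$ witnessing almost Gorensteinness of $\RR$ restricts well, i.e., that $x$ is a super-regular element on $C$ as well, while the ascent direction requires lifting an almost Gorenstein witness modulo $x$ back to $\RR$. Both directions are available in the technical results of \cite[Section 3]{GTT} on reduction of the almost Gorenstein property modulo a super-regular element; the substantive verification left to us is that genericity of $x$ with respect to $R$, $R/I$ and $\omega_R$ is enough to guarantee the corresponding genericity with respect to $\RR$ and $\omega_{\RR}=\Hom_R(\RR,\omega_R)$, which follows from the fact that these latter objects are finitely generated over $R$.
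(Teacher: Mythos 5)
Your proposal is correct and follows essentially the same route as the paper: reduce modulo generic regular elements using Lemma \ref{lemma}, invoke the descent/ascent results of \cite[Theorem 3.7]{GTT} (descent, which needs superficiality, only for the pair $(a',b')$ already known to give an almost Gorenstein ring; ascent, which needs no superficiality, for arbitrary $(a,b)$), and conclude from the one-dimensional case settled in Theorem \ref{conditions}. The paper chooses a full $C$-superficial sequence of length $d-1$ in one step rather than inducting on $d$ one element at a time, but that is only an organizational difference; the only small imprecisions to fix are that $\RR\cong R\oplus I$ is not free of rank $2$ as an $R$-module (though $x$ regular on $R$ still forces $x$ regular on $R\oplus I$), and that the genericity needed for the descent direction must be taken with respect to the cokernel $C$ of $R(I)_{a',b'}\to\omega_{R(I)_{a',b'}}$, not merely with respect to $R$, $R/I$ and $\omega_R$.
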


\begin{proof}
By Theorem \ref{conditions} it is enough to consider the case $d>1$.
Assume that there exist two elements $a', b' \in R$ for which
$R(I)_{a',b'}$ is almost Gorenstein. We have to show that $\RR$ is
almost Gorenstein for any $a,b \in R$. By Corollary \ref{gor2},
we can also assume that $R(I)_{a',b'}$ is not Gorenstein.

Our assumption means that there exists an exact sequence of
$R(I)_{a',b'}$-modules
$$
0 \rightarrow R(I)_{a',b'} \rightarrow \omega_{R(I)_{a',b'}} \rightarrow C \rightarrow 0,
$$
where the number of elements of a minimal system of generators of
$C$ equals its multiplicity. Let $M$ be the maximal ideal of
$R(I)_{a',b'}$ and consider the filtration of $C$ induced by $M$:
$$
C \supseteq MC \supseteq M^2C \supseteq \dots \supseteq M^i C
\supseteq \dots \ ;
$$
this is an $M$-filtration of the $R(I)_{a',b'}$-module $C$, but,
if we consider $C$ as an $R$-module, it is also an
$\m$-filtration. Therefore, we know that in $R$ there exists a
$C$-superficial sequence for $\m$ of length $d-1$; by definition
it is clear that it is also a $C$-superficial sequence for $M$.
%, where $M$ is the maximal ideal of $\RR$.
Moreover, we can choose a sequence ${\bf f}=f_1, \dots, f_{d-1}$
that is also $R$-regular and, since $I$ has height one, such that
$I+({\bf f})$ is $\m$-primary (see \cite[Corollary 8.5.9]{HS}).
Consequently ${\bf f}$ is an $\RR$-regular
sequence for any $a,b \in R$
and the ideal of $R/I$ generated
by the classes $\overline f_1, \dots, \overline f_{d-1}$ is
$\m/I$-primary; therefore ${\bf \overline{f}}$ is a regular sequence,
because $R/I$ is a CM ring of dimension $d-1$ by Lemma \ref{height}.
Hence we can use the previous lemma and, from
\cite[Theorem 3.7 (2)]{GTT}, it follows that
$$
\frac{R(I)_{a',b'}}{ {\bf f}R(I)_{a',b'}} \cong
\frac{R}{{\bf f}R}\left( \frac{I+{\bf f}R}{{\bf f}R} \right)_{\overline{a'},\overline{b'}}
$$
is almost Gorenstein of dimension $1$; by Theorem \ref{conditions},
$$
\frac{R}{{\bf f}R}\left( \frac{I+{\bf f}R}{{\bf f}R}
\right)_{\overline{a},\overline{b}}
$$
is an almost Gorenstein ring for any $\overline{a}, \overline{b}
\in R/{\bf f}R$. Observe also that, as above, the ideal
$(I+{\bf f}R)/{\bf f}R$ is $\m/{\bf f}R$-primary.

Finally, since ${\bf f}$
is an $\RR$-regular sequence, this implies that $\RR$ is almost
Gorenstein for any $a,b \in R$, by \cite[Theorem 3.7 (1)]{GTT}.
\end{proof}

By Proposition \ref{AGd>1}, $\RR$ is almost Gorenstein if and only
if  $R(I)_{0,0} \cong R \ltimes I$ is almost Gorenstein. We have
already observed that if $I$ and $J$ are two isomorphic ideals,
then $\RR$ and $R(J)_{a,b}$ do not need to be isomorphic (cf.
Example \ref {ex1}). Anyway, it is easy to see that this happens
for idealization, i.e. if $N_1$ and $N_2$ are two isomorphic
$R$-modules then $R \ltimes N_1\cong R \ltimes N_2$. Thus applying
Proposition \ref{AGd>1} we obtain the following corollary.

\begin{cor}
If $I,J$ are two isomorphic  regular ideals of $R$, then $\RR$ is
almost Gorenstein if and only if $R(J)_{a,b}$ is almost
Gorenstein.
\end{cor}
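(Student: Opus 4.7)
The plan is to chain Proposition \ref{AGd>1} with the elementary observation that Nagata's idealization depends only on the isomorphism class of its module argument. By Proposition \ref{AGd>1}, almost Gorensteinness of $R(I)_{a,b}$ is independent of $a$ and $b$, so $\RR$ is almost Gorenstein if and only if $R(I)_{0,0}\cong R\ltimes I$ is, and analogously $R(J)_{a,b}$ is almost Gorenstein if and only if $R\ltimes J$ is. Thus it suffices to prove $R\ltimes I\cong R\ltimes J$ as rings whenever $I\cong J$ as $R$-modules, as the proposition itself already remarks right before the corollary.

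Given an $R$-module isomorphism $\varphi\colon I\to J$, I would define $\Phi\colon R\ltimes I\to R\ltimes J$ by $\Phi(r+i)=r+\varphi(i)$. This is clearly $R$-linear and bijective, so only multiplicativity needs checking. Recall that in any idealization $R\ltimes N$ the product is $(r+n)(r'+n')=rr'+(rn'+r'n)$ because $N\cdot N=0$ by construction, so the formula uses only the $R$-module structure on $N$. Consequently
\[
\Phi((r+i)(r'+i'))=rr'+\varphi(ri'+r'i)=rr'+r\varphi(i')+r'\varphi(i)=\Phi(r+i)\Phi(r'+i'),
\]
so $\Phi$ is a ring isomorphism and the corollary follows.

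There is essentially no obstacle here: all the nontrivial content has already been absorbed into Proposition \ref{AGd>1}, and the rest is a direct unwinding of the definition of idealization. The one point worth flagging is that the argument bypasses $a$ and $b$ by routing both sides through the common specialization $(a,b)=(0,0)$, which is legitimate precisely because of the independence result; note also that although $\RR$ and $R(J)_{a,b}$ need not be isomorphic as rings, we only need them to share the almost Gorenstein property, which transfers through the chain $\RR \leftrightsquigarrow R\ltimes I \cong R\ltimes J \leftrightsquigarrow R(J)_{a,b}$.
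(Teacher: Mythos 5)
Your proposal is correct and follows exactly the paper's route: reduce via Proposition \ref{AGd>1} to the case $(a,b)=(0,0)$, i.e.\ to the idealizations $R\ltimes I$ and $R\ltimes J$, and then use that idealization depends only on the isomorphism class of the module. Your explicit verification that $\Phi(r+i)=r+\varphi(i)$ is a ring isomorphism merely spells out the step the paper dismisses as ``easy to see.''
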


In \cite{GTT} the authors study when the idealization
is almost Gorenstein. Proposition \ref{AGd>1} implies the
following generalization of \cite[Theorem 6.1]{GTT}.

\begin{cor} Let $I$ be a regular ideal of $R$ and assume that $I^{\vee}$ is isomorphic to a regular ideal
of $R$. Then
 the following are equivalent: \\
{\rm(1)} $\RR$ is almost Gorenstein for some $a,b \in R$; \\
{\rm (2)} $\RR$ is almost Gorenstein for all $a,b \in R$; \\
{\rm (3)}  $I$ is a maximal CM $R$-module and any proper
ideal $J$ of $R$ isomorphic to
$I^{\vee}$ is such that
 $f_1 \in J, \  \m(J+Q)=\m Q$, and $(J+Q)^2=Q(J+Q)$, for some parameter ideal $Q=(f_1, \dots, f_d)$ of $R$.
\end{cor}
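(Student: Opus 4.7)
The corollary is essentially a restatement, so the plan is short: combine Proposition \ref{AGd>1} (independence of the almost Gorenstein property from $a,b$) with the known characterization of almost Gorensteinness for the idealization, namely \cite[Theorem 6.1]{GTT}.

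First I would observe that (1) $\Leftrightarrow$ (2) is immediate from Proposition \ref{AGd>1}: that proposition says that if $\RR$ is almost Gorenstein for some choice of $a,b$, then it is almost Gorenstein for all choices, and the reverse implication is trivial.

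Next, to prove (2) $\Leftrightarrow$ (3), I would specialize to the pair $a=b=0$ and use the isomorphism $R(I)_{0,0} \cong R \ltimes I$ recalled in the introduction. By the previous step, the almost Gorensteinness of $\RR$ for all $a,b$ is equivalent to the almost Gorensteinness of $R \ltimes I$, and this in turn is characterized, under the standing hypothesis, by \cite[Theorem 6.1]{GTT}. The assumption that $I^{\vee}$ is isomorphic to a regular ideal of $R$ is precisely what guarantees the existence of a proper ideal $J$ of $R$ with $J \cong I^{\vee}$, so that the parameter ideal $Q=(f_1,\dots,f_d)$ and the conditions $f_1 \in J$, $\m(J+Q)=\m Q$, $(J+Q)^2=Q(J+Q)$ of item (3) can be considered, matching the setup of the cited theorem.

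I do not expect a genuine obstacle here: the real content has already been supplied by Proposition \ref{AGd>1}, which transfers idealization results to all members of the family $\RR$, and by \cite[Theorem 6.1]{GTT}. The only care needed is to verify that the conditions listed in item (3) are a verbatim transcription of those of the GTT theorem and that our hypothesis on $I^{\vee}$ coincides with the one used there; this is a straightforward comparison rather than a calculation.
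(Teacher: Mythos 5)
Your outline follows the paper's proof: Proposition \ref{AGd>1} gives (1) $\Leftrightarrow$ (2), and the equivalence with (3) is obtained by specializing to $R(I)_{0,0}\cong R\ltimes I$ and invoking \cite[Theorem 6.1]{GTT}. However, the step you dismiss as ``a straightforward comparison'' is where the only real content of the argument lies, and as written your plan skips it. The GTT theorem characterizes almost Gorensteinness of the idealization $R\ltimes J^{\vee}$ in terms of conditions on the ideal $J$; condition (3) of the corollary imposes those conditions on ideals $J\cong I^{\vee}$, so to apply the theorem to $R\ltimes I$ you must identify $R\ltimes I$ with $R\ltimes J^{\vee}$. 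This requires: first, that $I$ is a maximal CM $R$-module (itself part of the conclusion in (3), obtained from Proposition \ref{CM}); second, the double-duality isomorphism $J^{\vee}\cong I^{\vee\vee}\cong I$, valid for maximal CM modules over a CM ring with canonical module (\cite[Theorem 3.3.10]{b-h}); third, the observation, made in the paper just before the corollary, that isomorphic modules give isomorphic idealizations, so $R\ltimes J^{\vee}\cong R\ltimes I$; and finally the verification of the hypotheses of the GTT theorem for $J$ (e.g.\ that $R/J$ is CM of dimension $d-1$, which is Lemma \ref{height}). None of this is difficult, but without the double-duality bridge your appeal to \cite[Theorem 6.1]{GTT} is applied to the wrong object, so you should make these steps explicit rather than treat the conditions in (3) as a verbatim transcription.
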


\begin{proof}
The equivalence between {\rm (1)} and {\rm (2)}
follows from Proposition \ref{AGd>1}.

\noindent (2) $\Rightarrow$ (3): $I$ is a maximal CM $R$-module by
Proposition \ref{CM}, therefore $J \cong I^{\vee}$ is a maximal CM
$R$-module by \cite[Theorem 3.3.10]{b-h} and it is also isomorphic
to a regular ideal; moreover $J^{\vee} \cong I$, thus the
idealizations $R \ltimes I$ and $R \ltimes J^{\vee}$ are
isomorphic. Furthermore, by Lemma \ref{height}, it follows that
$R/J$ is a CM ring of dimension $d-1$. Thus it follows from
\cite[Theorem 6.1, (1) $\Rightarrow$ (2)]{GTT} that (3) holds
because
 $   R(I)_{0,0} \cong R \ltimes I \cong R \ltimes J^{\vee}$ is almost
Gorenstein.

\noindent (3) $\Rightarrow$ (1): By \cite[Theorem 6.1, (2) $\Rightarrow$ (1)]{GTT} we have that $R \ltimes J^{\vee} \cong R(I)_{0,0}$ is almost Gorenstein.
\end{proof}

\end{document}